  \def\command@factory#1{%
    \expandafter\def\csname B#1\endcsname{\mathbf{#1}}
  }
  \def\command@factory#1{%
    \expandafter\def\csname mb#1\endcsname{\mathbb{#1}}
  }
  \def\command@factory#1{%
    \expandafter\def\csname mc#1\endcsname{\mathcal{#1}}
  }
\newtheorem{theorem}{Theorem}
\newtheorem{proposition}{Proposition}
\newtheorem{corollary}{Corollary}
\newtheorem{lemma}{Lemma}
\theoremstyle{definition}
\newtheorem{remark}{Remark}
\DeclareMathOperator*{\argmax}{argmax}
\DeclarePairedDelimiter\abs{\lvert}{\rvert}
\DeclarePairedDelimiter\norm{\lVert}{\rVert}
\newcommand{\cbr}[1]{\left(#1\right)}
\newcommand{\sbr}[1]{\left[#1\right]}
\newcommand{\interior}[1]{{\kern0pt#1}^{\mathrm{o}}}
\title{Asymptotically well-calibrated Bayesian $p$-value using the Kolmogorov-Smirnov statistic}
\author{Yueming Shen, Surya T. Tokdar \\ Department of Statistical Science \\ Duke University}
\date{\today}
\begin{document}
\maketitle

\begin{abstract}
    The posterior predictive $p$-value (ppp) is widely used in Bayesian model evaluation. However, due to double use of the data, the ppp may not be a valid $p$-value even in large samples: The asymptotic null distribution of the ppp can be non-uniform unless the underlying test statistic satisfies certain well-calibration conditions. Such conditions have been studied in the literature for asymptotically normal test statistics. We extend this line of work by establishing well-calibration conditions for test statistics that are not necessarily asymptotically normal. In particular, we show that Kolmogorov-Smirnov (KS)-type test statistics satisfy these conditions, such that their ppps are asymptotically well-calibrated Bayesian $p$-values. KS-type statistics are versatile, omnibus, and sensitive to model misspecifications. They apply to i.i.d. real-valued data, as well as non-identically distributed observations under regression models. Numerical experiments demonstrate that such $p$-values are well behaved in finite samples and can effectively detect a wide range of alternative models.

\smallskip

\noindent \textit{Keywords:} Asymptotic properties; Goodness of fit; Model checking; Empirical process theory; O-minimal structure

\end{abstract}
\section{Introduction}

For Bayesians, assessing the adequacy of a fitted model is no simple task. In principle, at the end of the analysis pipeline, no obvious alternative model is left to explore once a ``best'' model has been chosen during intermediate stages of model selection and comparison. Arguably, all that remains to critique is any lingering secondary disagreements between the observed data and the hypothetical data implied by the fitted model. This perspective is clearly Fisherian in its framing and could be approached statistically through Fisher's significance testing. Indeed, one of the most well known tools for Bayesian model checking is the posterior predictive $p$-value \citep[ppp,][]{guttman1967goodnessoffit,rubin1984bayesianly,gelman1996posterior} which tries to measure how much ``out in the tails'' the observed data is relative to synthetic data one would generate from the fitted posterior predictive distribution. 

To be precise, suppose we posit a parametric sampling model $\mcP=\{P_{\bm{\theta}}: \bm{\theta} \in \Theta \subset \mbR^k \}$ for observed data of length $n$, $\By_n \in \mcY_n$, along with a prior $\pi(\bm{\theta})$ on the parameter $\bm{\theta}$. In using the ppp, one carries out significance testing of the composite hypothesis $H: \BY_n \sim P_{\bm{\theta}}$, $\bm{\theta} \in \Theta$. With a slight abuse of notation, we use $P_{\bm{\theta}}$ to denote both the probability measure and its cumulative distribution function (CDF). Let $t(\BY_n): \mcY_n \to \mbR$ be a test statistic whose larger values indicate greater discrepancy between observed data $\By_n$ and the hypothesis $H$, $p_{\bm{\theta}}(\By)$ denote the density of $P_{\bm{\theta}}$ under the Lebesgue measure, and $\pi(\bm{\theta} \mid \By_n)$ denote the posterior distribution. The ppp is defined as:
\begin{align*}
  \mathrm{ppp}(\By_n) = P^{m(\By \mid \By_n)}[t(\BY_n) \ge t(\By_n)],
\end{align*}
where the reference distribution for $\BY_n$ is the posterior predictive distribution
\begin{align*}
  m(\By \mid \By_n) = \int_{\Theta}p_{\bm{\theta}}(\By)\pi(d\bm{\theta} \mid \By_n).
\end{align*}
Smaller $p$-values provide stronger evidence against $H$. 

The ppp is well known, intuitive, and easy to implement. And yet, there are challenges to its practical use. The validity of Fisher's significance testing critically depends on the $p$-value distribution being uniform under the null, at least approximately if not exactly. However, the ppp often fails to be uniformly distributed largely due to the ``double use'' of data: the same data is used to fit the posterior and assess disagreement. \cite{robins2000asymptotic} show that even in the limit as sample size grows to infinity, the ppp can be too conservative. The probability that the ppp falls below a small number $\alpha$ can be much smaller than $\alpha$ itself. Ideally, this probability should be exactly $\alpha$, so that reporting a $p$-value of $\alpha$ conveys the message that there is exactly $\alpha$ probability of observing more extreme data under $H$. $P$-values that are uniformly distributed under the null satisfy this requirement and are called {\it well calibrated}.

Well-calibration is fundamentally a frequentist concept. Bayesians with subjective prior information might not be concerned with well-calibration under all parameter values under a composite null. However, in the absence of strong prior information, Bayesians often adopt reference priors to maximize learning from the data, or use weak, convenient priors for practicality. In such cases where the prior may not encode strong subjective beliefs but primarily facilitates analysis, well-calibration could be important to Bayesians as it ensures reliability of the $p$-value for model criticism. This is the setting we focus on in this article. 

One way to make the ppp approximately well calibrated is to choose the test statistic $t(y)$ cleverly. \cite{robins2000asymptotic} demonstrate that the ppp is asymptotically well calibrated if $t(y)$ is asymptotically normal with an asymptotic mean that does not depend on the unknown parameter $\bm\theta$. However, this requirement severely restricts the choice of test statistics one could consider for a given analysis. Statistics satisfying this condition may not be sensitive to mismatch between $y$ and $H$, whereas many practically useful statistics do not satisfy it.

These challenges lead to the question: are omnibus, non-bespoke constructions of $t(y)$ available for which the ppp is well calibrated? We show here that for real-valued random samples under parametric models, such a construction is available in the form of \textit{Kolmogorov-Smirnov} (KS)-type statistics. These are omnibus test statistics which provide a global nonparametric assessment of data-model compatibility, well suited for catching secondary deficiencies of a parametric model, e.g., lack of skewness, kurtosis, or bimodality of a normal model. They are widely applicable, sensitive to model misspecification, and crucially, as we will demonstrate, yield asymptotically well-calibrated ppp without satisfying the conditions of \cite{robins2000asymptotic}. 

Let $Y_1,\dots,Y_n$ be real-valued random variables. Consider testing for $H: Y_i \sim$ i.i.d. $P_{\bm{\theta}}$. The KS test quantifies discrepancy between the empirical distribution of the sample and $P_{\bm{\theta}}$. It is defined as $\sqrt{n}\sup_{y \in \mbR} \abs{P_n(y)-P_{\bm{\theta}}(y)}$, where $P_n$ denotes the empirical CDF. Replacing $\bm{\theta}$ with an estimator $\hat{\bm{\theta}}_n$ yields the {\it classical modified KS} (CKS) test statistic 
\begin{align*}
  K_n=\sqrt{n}\sup_{y \in \mbR}\abs{P_n(y) - P_{\hat{\bm{\theta}}_n}(y)}.
\end{align*}
The scaling by $\sqrt{n}$ ensures that under proper assumptions, $K_n$ converges in distribution to a continuous random variable, rather than a point mass at zero. For more complex settings such as regression models where the observations are independent but not identically distributed, let $P_{i,\bm{\theta}}$ denote the CDF of $Y_i$, a {\it generalized modified KS} (GKS) test statistic $\tilde{K}_n$ can be used:
\begin{align*}
  \tilde{K}_n = \sqrt{n} \sup_{u \in [0,1]} \abs{u - F_{n,\hat{\bm{\theta}}_n}(u)}, \quad F_{n,\hat{\bm{\theta}}_n}(u) = \frac 1n \sum_{i=1}^n 1(P_{i,\hat{\bm{\theta}}_n}(Y_i) \le u).
\end{align*}
This test statistic is reasonable because under the null, $P_{i,\bm{\theta}}(Y_i)$'s are i.i.d. uniform.

Asymptotic properties of the ppp under the CKS and GKS statistics (denoted as ppp(CKS) and ppp(GKS)) remain unexplored in the literature. Simply knowing the asymptotic distributions of $K_n$ and $\tilde{K}_n$ is not enough to establish the limiting distributional properties of the ppp, which additionally involves the posterior predictive distribution under a given prior. As shown in \cite{robins2000asymptotic}, the asymptotic behavior of the ppp depends on the limiting distribution of the test statistic under ``local alternatives'', as well as concentration properties of the posterior distribution. Unfortunately, the theory of \cite{robins2000asymptotic} only applies to asymptotically normal test statistics, whereas, both $K_n$ and $\tilde{K}_n$ converge in distribution to the supremum of a mean-zero gaussian process (\cite{van2000asymptotic}) under the null, and hence are not asymptotically normal. Therefore, new theoretical tools are needed to study the asymptotic behavior of the ppp(CKS) and ppp(GKS).

In Section \ref{sec:main}, we establish a general result that a sufficient condition for the ppp to be asymptotically well calibrated is to have the test statistic converge in distribution to a continuous random variable under contiguous alternatives $P_{\bm{\theta}_n}$, for any $\bm{\theta}_n$ such that $\norm{\bm{\theta}_n-\bm{\theta}_0}= O(1/\sqrt{n})$. We then show that both the CKS and GKS tests satisfy this condition. For the CKS test, we first apply Le Cam's third lemma to verify the condition for $\sqrt{n}(P_n(y)-P_{\hat{\bm{\theta}}_n}(y))$ for any single $y$. Using empirical process theory, we extend this result uniformly in $y$ to establish the condition for $K_n$. The argument for the GKS test follows a similar structure but is technically more involved because it requires handling non-continuous transformations of the estimators. We additionally use symmetrization, Dudley’s entropy integral theorem, and VC-dimension bounds derived from o-minimal structures to establish the result for $\tilde{K}_n$. In Section \ref{sec:sim}, we study finite-sample behavior of the ppp(CKS) and ppp(GKS) through numerical experiments, showing that both are well behaved under the null and exhibit strong power against a range of alternatives. Then we conclude with discussions in Section \ref{sec:dis}. Our results show that well-calibrated ppp-based Bayesian model checks could be constructed outside of the theory of \cite{robins2000asymptotic}, with the ppp(CKS) and ppp(GKS) being compelling canonical choices.

\section{Asymptotic well-calibration}\label{sec:main}

\subsection{General framework}

We first establish sufficient conditions on the asymptotic behavior of a test statistic for the ppp to be asymptotically well calibrated. Let $\BY_n \in \mbR^n$ denote the random data vector of length $n$, and $\By_n \in \mbR^n$ denote a realization of $\BY_n$. The data are modeled as $\BY_n \sim P_{\bm{\theta}}^n$, $\bm{\theta} \subset \Theta \subset \mbR^k$. Let $\bm{\theta}_0 \in \interior{\Theta}$ denote the unknown true parameter, where $\interior{\Theta}$ is the interior of $\Theta$.

\begin{proposition}\label{prop:paper5thm3.2}
  
  For any constant $c \in \mbR$, define $A_n(c) := \{\bm{\theta} \in \Theta, \norm{\bm{\theta}-\bm{\theta}_0} \le c/\sqrt{n} \}$. Let $T_n$ be a test statistic with CDF $G_n(t \mid \bm{\theta})$. If 
  \begin{enumerate}[label=P\arabic{*}., ref=P\arabic{*}]
      \item \label{assmp:pc} for any $\epsilon>0$, there exists $c_\epsilon$, $N_\epsilon>0$ such that for all $n>N_\epsilon$, 
      \begin{align*}
          P_{\bm{\theta}_0}^n(\pi(A_n(c_\epsilon) \mid \BY_n) \ge 1-\epsilon) \ge 1-\epsilon;
      \end{align*} 
      \item \label{assmp:cont} $T_n \rightsquigarrow T$ for some random variable $T$ with continuous CDF;
      \item \label{assmp:uc} for any $c \in \mbR$, $\sup_{\bm{\theta} \in A_n(c),t \in \mbR}\abs{G_n(t \mid \bm{\theta}) - G_n(t \mid \bm{\theta}_0)} \to 0$. 
  \end{enumerate}
  Then the ppp of $T_n$ is asymptotically well calibrated.
\end{proposition}

\begin{proof}
    This proposition makes reference to Theorem 3.2 in \cite{wang2021calibration}. Let $p_0(\By_n)=1-G_n(t_n \mid \bm{\theta}_0)$ denote the ppp for testing $H_0:\bm{\theta}=\bm{\theta}_0$. Observe that
    \begin{align*}
        &\sup_{\alpha \in [0,1]} \abs*{P_{\bm{\theta}_0}^n(\mathrm{ppp}(\BY_n) \le \alpha) - \alpha} \\
        &\le \sup_{\alpha \in [0,1]} \abs*{P_{\bm{\theta}_0}^n(p_0(\BY_n) \le \alpha) - \alpha} + \sup_{\alpha \in [0,1]} \abs*{P_{\bm{\theta}_0}^n(\mathrm{ppp}(\BY_n) \le \alpha) - P_{\bm{\theta}_0}^n(p_0(\BY_n) \le \alpha)}.
    \end{align*}
    Therefore the proof is complete once we can show these two terms converge to zero.

    Notice that for any $\alpha \in [0,1]$, $P_{\bm{\theta}_0}(p_0(\BY_n) \le \alpha)= 1-G_n(G_n^-(1-\alpha \mid \bm{\theta}_0) \mid \bm{\theta}_0)$, where $G_n^-(\alpha \mid \bm{\theta}):=\inf\{t \in \mbR, G_n(t \mid \bm{\theta}) \ge \alpha\}$ is the pseudo-inverse of $G_n$. By Lemma 21.2 in \cite{van2000asymptotic}, \ref{assmp:cont} implies that $G_n^-$ converges pointwise to $G^-$, hence $G_n^-(1-\alpha \mid \bm{\theta}_0) \to G^-(1-\alpha \mid \bm{\theta}_0)$. Because $G$ is continuous, $G_n$ converges uniformly to $G$ by Lemma \ref{lem:cdf_conv}. Thus by Lemma \ref{lem:uc}, $G_n(G_n^-(1-\alpha)) \to G(G^-(1-\alpha)) = 1-\alpha$, which shows that $p_0(\BY_n)$ converges in distribution to uniform. Lemma \ref{lem:cdf_conv} then implies the first term converges to zero.

    To show the second term converges to zero, it suffices to show $E_{\bm{\theta}_0} \abs{\mathrm{ppp}(\BY_n) - p_0(\BY_n)} \to 0$. For any $c \in \mbR$,
    \begin{align*}
      &E_{\bm{\theta}_0} \abs*{\mathrm{ppp}(\BY_n) - p_0(\BY_n)} = E_{\bm{\theta}_0}\abs*{\int_{\Theta} [G_n(T_n \mid \bm{\theta}_0) - G_n(T_n \mid \bm{\theta})] \pi(d\bm{\theta} \mid \BY_n)} \\
      \le& E_{\bm{\theta}_0}\abs*{\int_{\Theta \cap A_n(c)}[G_n(T_n \mid \bm{\theta}_0) - G_n(T_n \mid \bm{\theta})] \pi(d\bm{\theta} \mid \BY_n)} \\
      &\qquad{}+ E_{\bm{\theta}_0}\abs*{\int_{\Theta \backslash A_n(c)}[G_n(T_n \mid \bm{\theta}_0) - G_n(T_n \mid \bm{\theta})] \pi(d\bm{\theta} \mid \BY_n)} \\
      \le& \underbrace{\sup_{\bm{\theta} \in A_n(c)} \sup_{t \in \mbR} \abs{G_n(t \mid \bm{\theta}_0) - G_n(t \mid \bm{\theta})}}_{\text{(a)}} + \underbrace{1- E_{\bm{\theta}_0} \pi(A_n(c) \mid \BY_n)}_{\text{(b)}}.
    \end{align*}
    By \ref{assmp:pc}, for any $\epsilon >0$, there exists $c_\epsilon$, and $N_\epsilon$ large such that for all $n>N_\epsilon$,
    \begin{align*}
        P_{\bm{\theta}_0}^n(\pi(A_n(c_\epsilon) \mid \BY_n) \ge 1-\epsilon) \ge 1-\epsilon.
    \end{align*}
    Therefore $E_{\bm{\theta}_0} [\pi(A_n(c) \mid \BY_n)] \ge (1-\epsilon)^2$, and (b) $\le 1-(1-\epsilon)^2 < 2\epsilon$. By \ref{assmp:uc}, with $c_\epsilon$ chosen as above, term (a) $\to 0$ as $n$ increases. As $\epsilon$ is arbitrary, (a)+(b) $\to 0$. This concludes the proof.
\end{proof}

Next, we present slightly stronger but more easily verifiable conditions that lead to assumptions \ref{assmp:pc}-\ref{assmp:uc}. Assumption \ref{assmp:pc} can be established under the same regularity conditions that guarantee the Bernstein-von Mises (BvM) property of a prior for parametric models. Let $d_{TV}(P,Q)=\sup_{B \in \mcB}\abs{P(B)-Q(B)}$ denote the total variation distance between probability measures $P$ and $Q$ defined on some probability space $(\Omega,\mcB)$. 

\begin{lemma}\label{lem:l1}
    If there exists a consistent and asymptotically normal estimator $\tilde{\bm{\theta}}_n$ such that $\sqrt{n}(\tilde{\bm{\theta}}_n-\bm{\theta}_0) \rightsquigarrow N(\bm{0},I(\bm{\theta}_0)^{-1})$, for some positive definite matrix $I(\bm{\theta}_0)$, and the posterior has the BvM property: $d_{TV}(\pi(\cdot \mid \BY_n),\hat{\pi}_n) \overset{p}{\to} 0$, where $\hat{\pi}_n$ is the probability measure for $N(\tilde{\bm{\theta}}_n,I(\bm{\theta}_0)^{-1}/n)$, then \ref{assmp:pc} holds.
\end{lemma}

\begin{proof}[Proof of Lemma \ref{lem:l1}]

    Recall that $A_n(c)=B(c/\sqrt{n},\bm{\theta}_0) \cap \Theta$, where $B(r,\Ba)$ denotes an open ball of radius $r$ centered at $\Ba$. By asymptotic normality, for any $\epsilon>0$, there exists $c_\epsilon^* >0$ and $N_\epsilon^*$ such that $P_{\bm{\theta}_0}^n (\tilde{\bm{\theta}}_n(\BY_n) \in A_n(c_\epsilon^*)) \ge 1-\epsilon/2$ for all $n > N_\epsilon^*$. Find $d_\epsilon>0$ such that the $N(\bm{0},I(\bm{\theta}_0)^{-1})$ distribution places at least $1-\epsilon/2$ probability in $B(d_\epsilon,\bm{0}) \subset \mbR^k$. Clearly, $\hat{\pi}_n(B(d_\epsilon/\sqrt{n},\tilde{\bm{\theta}}_n)) \ge 1-\epsilon/2$ under $P_{\bm{\theta}_0}^n$. Set $c_\epsilon := c_\epsilon^* + d_\epsilon$. Notice that $\tilde{\bm{\theta}}_n \in A_n(c_\epsilon^*)$ implies $B(d_\epsilon/\sqrt{n},\tilde{\bm{\theta}}_n) \subset A_n(c_\epsilon)$. Therefore
    \begin{align*}
        P_{\bm{\theta}_0}^n \cbr{\hat{\pi}_n(A_n(c_\epsilon)) \ge 1-\frac{\epsilon}2} \ge P_{\bm{\theta}_0}^n(\tilde{\bm{\theta}}_n \in A_n(c_\epsilon^*)) \ge 1-\frac{\epsilon}2,
    \end{align*}
    for all $n > N_\epsilon^*$. From here onwards we use $A_n$ in place of $A_n(c_\epsilon)$ for simplicity. Let $\pi_n$ denote the posterior measure $\pi(\cdot \mid \BY_n)$. Then
    \begin{align*}
        P_{\bm{\theta}_0}^n(\pi_n(A_n) < 1-\epsilon) \le P_{\bm{\theta}_0}^n\cbr{\hat{\pi}_n(A_n) < 1- \frac \epsilon2} + P_{\bm{\theta}_0}^n\cbr{\abs{\hat{\pi}_n(A_n)-\pi_n(A_n)} \ge \frac{\epsilon}2}.
    \end{align*}
    The first term on the right is bounded by $\epsilon/2$ for all $n>N_\epsilon^*$. By the BvM property, $\abs{\hat{\pi}_n(A_n)-\pi_n(A_n)} \overset{p}{\to}0$, therefore there exists $N_\epsilon>N_\epsilon^*$ such that the second term is smaller than $\epsilon/2$ for all $n > N_\epsilon$. This concludes the proof.

\end{proof}

\begin{lemma}\label{lem:l2l3}
  A sufficient condition for \ref{assmp:cont} and \ref{assmp:uc} is
  \begin{align}\label{eq:thm_iid_c_step1}
    T_n \overset{P_{\bm{\theta}_n}^n}{\rightsquigarrow} T, \quad \text{ for every sequence } \bm{\theta}_n \text{ with } \norm{\bm{\theta}_n-\bm{\theta}_0}= O(1/\sqrt{n}),
  \end{align}
  where $T$ is a random variable with continuous CDF.
  
\end{lemma}

\begin{proof}
    It is obvious that equation \eqref{eq:thm_iid_c_step1} implies \ref{assmp:cont}. Let $G_T$ denote the CDF of $T$, then
    \begin{align*}
        &\sup_{\bm{\theta} \in A_n(c)} \sup_{t \in \mbR} \abs{G_n(t \mid \bm{\theta})-G_n(t \mid \bm{\theta}_0)} \\
        \le& \sup_{t \in \mbR} \abs{G_n(t \mid \bm{\theta}_0)-G_T(t)} + \sup_{\bm{\theta} \in A_n(c)} \sup_{t \in \mbR} \abs{G_n(t \mid \bm{\theta})-G_T(t)}.
    \end{align*}
    Therefore to show equation \eqref{eq:thm_iid_c_step1} implies \ref{assmp:uc}, it suffices to show the last term converges to zero for any $c>0$. Suppose not, then there exists $c \in \mbR$ and a sequence $\{\bm{\theta}_n^* \} \subset A_n(c)$ such that $\sup_{t \in \mbR} \abs{G_n(t \mid \bm{\theta}_n^*)-G_T(t)} \not\to 0$, i.e., $T_n$ does not converge in probability to $T$ under $P_{\bm{\theta}_n^*}$. However, $\norm{\bm{\theta}_n-\bm{\theta}_0}= O(1/\sqrt{n})$, hence a contradiction.

\end{proof}

The assumptions in Theorem 1 of \cite{robins2000asymptotic} satisfy conditions of Lemmas \ref{lem:l1}-\ref{lem:l2l3}, therefore the following result discussed in their Remark 2 is a special case of our Proposition \ref{prop:paper5thm3.2}: For asymptotically normal test statistics, the ppp is asymptotically well calibrated if the asymptotic mean of the statistic is constant in the unknown parameter. 

\subsection{The classical Kolmogorov-Smirnov statistic}

Now we present sufficient conditions for the CKS to satisfy Lemmas \ref{lem:l1} and \ref{lem:l2l3}, such that ppp(CKS) is asymptotically well calibrated. Let $Y_1, \dots, Y_n$ be random variables on $(\mbR, \mcB(\mbR))$, where $\mcB$ denotes the Borel $\sigma$-algebra. Consider using ppp(CKS) with estimator $\hat{\bm{\theta}}_n$ to test for $H: Y_i \sim$ i.i.d. $P_{\bm{\theta}}$, $\bm{\theta} \in \Theta \subset \mbR^k$, where $P_{\bm{\theta}}$ is a parametric model with density $p_{\bm{\theta}}$ under the Lebesgue measure. Let $\dot{P}_{\bm{\theta}}: \mbR \to \mbR^k$ denote the gradient vector of $P_{\bm{\theta}}$ with respect to $\bm{\theta}$, $s(\bm{\theta})=s(Y;\bm{\theta}): \interior{\Theta} \to \mbR^k$ denote the score vector, and we use $s_i(\bm{\theta})=s(Y_i;\bm{\theta})$ for the score contribution of $Y_i$. Unless otherwise specified, convergence in probability and in distribution are under $P_{\bm{\theta}_0}$. We assume:

\begin{enumerate}[label=A\arabic{*}., ref=A\arabic{*}]
  \item\label{assmp:supp_A} The support of $p_{\bm{\theta}}$, $\{y \in \mbR; p_{\bm{\theta}}(y)>0 \}$, is independent of $\bm{\theta}$.
  \item\label{assmp:diff_A} For all $y$, $p_{\bm{\theta}}(y)$ is continuously differentiable in $\bm{\theta}$, and the map $\bm{\theta} \mapsto P_{\bm{\theta}}$ from $\mbR^k$ to $\ell^\infty(\mbR)$ is Fréchet differentiable at $\bm{\theta}_0$, i.e., there exists a bounded linear map $A_{\bm{\theta}_0}: \mbR^k \mapsto \ell^\infty(\mbR)$, referred to as the Fréchet derivative, such that
  \begin{align}\label{eq:frechet}
    \sup_{y \in \mbR} \abs{P_{\bm{\theta}_0+\Bh}(y)-P_{\bm{\theta}_0}(y) - A_{\bm{\theta}_0}(\Bh)(y)} = o(\norm{\Bh}) \quad \text{as } \Bh \to \bm{0}.
  \end{align}
  \item\label{assmp:domination_A} There exists $\delta_0>0$ such that $E[\sup_{\bm{\theta} \in B(\delta_0,\bm{\theta}_0)} \norm{s(\bm{\theta})}] < \infty$.
  \item\label{assmp:Fisher_A} Elements of the Fisher information matrix $I(\bm{\theta})=E_{\bm{\theta}}[s(\bm{\theta})s(\bm{\theta})^\top]$ are well-defined, continuous in $\bm{\theta}$, and non-singular for all $\bm{\theta} \in \Theta$.
  \item\label{assmp:BvM} Let $\hat{\bm{\theta}}_n^M$ denote the maximum likelihood estimator (MLE), $\sqrt{n}(\hat{\bm{\theta}}_n^{M}-\bm{\theta}_0)=O_p(1)$, and $d_{TV}(\pi(\cdot \mid \BY_n), \hat{\pi}_n) \overset{p}{\to} 0$, where $\hat{\pi}_n$ is the probability measure for $N(\hat{\bm{\theta}}^M_n, I(\bm{\theta}_0)^{-1}/n)$.
  \item\label{assmp:estimator} $\hat{\bm{\theta}}_n$ admits the following asymptotic linear expansion:
    \begin{align*}
        \hat{\bm{\theta}}_n = \bm{\theta}_0 + \frac 1n I(\bm{\theta}_0)^{-1} \sum_{i=1}^n s_i(\bm{\theta}_0) + o_p(\bm{1}_k/\sqrt{n}).
    \end{align*}
\end{enumerate}

\begin{theorem}\label{thm:CKS}
  Under \ref{assmp:supp_A}-\ref{assmp:estimator}, the ppp(CKS) is asymptotically well calibrated.
\end{theorem}

\begin{remark}\label{rmk:diff}
	\ref{assmp:diff_A} specifies the required differentiability conditions. These are typically satisfied by continuous distributions from the exponential family. The key requirements are:
	\begin{enumerate}[label=\roman*]
		\item Fréchet differentiability, for asymptotic tightness of $\sqrt{n}(P_n-P_{\hat{\bm{\theta}}_n})$ (Theorem 19.23, \cite{van2000asymptotic}).
		\item Quadratic mean differentiability (QMD) of the null model at $\bm{\theta}_0$, for local asymptotic normality to invoke Le Cam's third lemma.
	\end{enumerate}
\end{remark}

It is not straightforward to check Fréchet differentiability. We present a sufficient condition below which is easier to verify.

\begin{proposition}\label{prop:frechet}
    Let $Q(u,\bm{\theta})$ denote the quantile function of $P_{\bm{\theta}}$, then $g(u,\bm{\theta}):=\dot{P}_{\bm{\theta}}(Q(u,\bm{\theta}))$ is a function defined on $(0,1) \times \Theta$. If $g$ can be extended to a continuous function on $[0,1] \times \Theta$, then the map $\bm{\theta} \mapsto P_{\bm{\theta}}$ is Fréchet differentiable.
\end{proposition}

\begin{remark}
    For example, for the Gamma model considered in the simulation study in Section \ref{sec:sim_gamma}, it can be shown that $\lim_{u \to 0^+}g(u,\bm{\theta})=\bm{0}$, and $\lim_{u \to 1^-}g(u,\bm{\theta})=\bm{0}$. Consequently, the condition in Proposition \ref{prop:frechet} is satisfied.
\end{remark}

\begin{proof}
    This proposition makes reference to the proofs of Lemma 1 and Lemma 2 in \cite{durbin1973weak}. To show Fréchet differentiability of $P_{\bm{\theta}}$ at $\bm{\theta}_0$, it suffices to show:
    \begin{enumerate}[label=(\roman*)]
        \item $\dot{P}_{\bm{\theta}_0}$ is a bounded linear operator, and
        \item $\sup_{y \in \mbR} \abs{P_{\bm{\theta}_0+\Bh}(y)-P_{\bm{\theta}_0}(y)-\dot{P}_{\bm{\theta}_0}(y)^\top \Bh} = o(\norm{\Bh})$.
    \end{enumerate}
    By the assumption, $g$ can be extended to a function $\bar{g}$, a function continuous in $(u,\bm{\theta})$ on the compact set $[0,1] \times \bar{B}(\delta_0,\bm{\theta}_0)$, where $\bar{B}$ denotes a closed ball. Hence there exists $M_1>0$ such that $\sup_{u \in (0,1),\bm{\theta} \in B(\delta_0,\bm{\theta}_0)}\norm{g(u,\bm{\theta})} \le \sup_{u \in [0,1],\bm{\theta} \in \bar{B}(\delta_0,\bm{\theta}_0)}\norm{\bar{g}(u,\bm{\theta})} \le M_1$. Therefore
    \begin{align*}
        \sup_{y \in \mbR, \bm{\theta}\in B(\delta_0,\bm{\theta}_0)}\norm{\dot{P}_{\bm{\theta}}(y)}&= \sup_{u \in (0,1),\bm{\theta} \in B(\delta_0,\bm{\theta}_0)}\norm{\dot{P}_{\bm{\theta}}(Q(u,\bm{\theta}))} \\
        &= \sup_{u \in (0,1),\bm{\theta} \in B(\delta_0,\bm{\theta}_0)}\norm{g(u,\bm{\theta})} \le M_1,
    \end{align*}
    which is sufficient for (i).

    For (ii), observe that the left-hand side equals to $\sup_{y \in \mbR} \abs{(\dot{P}_{\tilde{\bm{\theta}}}(y)-\dot{P}_{\bm{\theta}_0}(y))^\top \Bh}$ for some $\tilde{\bm{\theta}}=t\bm{\theta}_0 + (1-t)(\bm{\theta}_0+\Bh)$, $t \in [0,1]$. Let $y=Q(u,\bm{\theta}_0)=Q(\tilde{u},\tilde{\bm{\theta}})$, observe 
    \begin{align*}
        \sup_{u \in (0,1)}\abs{\tilde{u}(u)-u} &= \sup_{u \in (0,1)}\abs{P_{\tilde{\bm{\theta}}}(Q(u,\bm{\theta}_0))-P_{\bm{\theta}_0}(Q(u,\bm{\theta}_0))} \\
        &= \sup_{u \in (0,1)}\abs{\dot{P}_{\bm{\theta}^*}(Q(u,\bm{\theta}_0))^\top (\tilde{\bm{\theta}}-\bm{\theta}_0)} \le M_1\norm{\tilde{\bm{\theta}}-\bm{\theta}_0},
    \end{align*}
    where $\bm{\theta}^* = t^*\bm{\theta}_0 + (1-t^*)\tilde{\bm{\theta}}$, for some $t^* \in [0,1]$. Therefore it goes to zero uniformly in $u$ as $\tilde{\bm{\theta}} \to \bm{\theta}_0$. And because $\bar{g}$ is uniformly continuous in $(u,\bm{\theta})$ on the compact set $[0,1] \times \bar{B}(\delta_0,\bm{\theta}_0)$, so is $g$ on $(0,1) \times B(\delta_0,\bm{\theta}_0)$. Hence as $\Bh \to \bm{0}$ and $\tilde{\bm{\theta}} \to \bm{\theta}_0$, $g(\tilde{u},\tilde{\bm{\theta}}) \to g(u,\bm{\theta}_0)$ uniformly in $u$. Therefore
    \begin{align*}
        \sup_{y \in \mbR}\norm{\dot{P}_{\tilde{\bm{\theta}}}(y)-\dot{P}_{\bm{\theta}_0}(y)} &= \sup_{u \in (0,1)}\norm{\dot{P}_{\tilde{\bm{\theta}}}(Q(u,\bm{\theta}_0))-\dot{P}_{\bm{\theta}_0}(Q(u,\bm{\theta}_0))} \\
        &= \sup_{u \in (0,1)}\norm{\dot{P}_{\tilde{\bm{\theta}}}(Q(\tilde{u},\tilde{\bm{\theta}}))-\dot{P}_{\bm{\theta}_0}(Q(u,\bm{\theta}_0))}\\
        &= \sup_{u \in (0,1)}\norm{g(\tilde{u},\tilde{\bm{\theta}}) - g(u,\bm{\theta}_0)} \to 0
    \end{align*}
    as $\Bh \to \bm{0}$, which completes the proof.
\end{proof}

\begin{remark}\label{rmk:domination}
	\ref{assmp:domination_A} ensures $s(\bm{\theta})$ is locally uniformly integrably bounded at $\bm{\theta}_0$, so that we can apply Leibniz's rule to interchange integral and differentiation below, which is needed in the proof:
	\begin{align*}
	\int_{-\infty}^y s(\bm{\theta}_0)p_{\bm{\theta}_0}(u)du = \int_{-\infty}^y \frac{\partial P_{\bm{\theta}}(u)}{\partial \bm{\theta}} \Big|_{\bm{\theta}=\bm{\theta}_0} du = \frac{\partial}{\partial \bm{\theta}}\int_{-\infty}^y P_{\bm{\theta}}(u) du \Big|_{\bm{\theta}=\bm{\theta}_0} = \dot{P}_{\bm{\theta}_0}(y).
	\end{align*}
	A sufficient condition for \ref{assmp:domination_A} is that for all $i$, there exists a continuous function $g_i: \Theta \to \mbR$ and a measurable function $h_i: \mbR \to \mbR$ such that
  \begin{align}\label{eq:domination}
    \abs{s_i(\bm{\theta})} \le g_i(\bm{\theta}) + h_i(Y), \quad E_{\bm{\theta}_0}[h_i(Y)] < \infty.
  \end{align}
	This is because by continuity of $g_i$, there exists $\bm{\theta}_i^* = \argmax_{\bm{\theta} \in \bar{B}(\delta_0,\bm{\theta}_0)} g_i(\bm{\theta})$, hence
  \begin{align*}
    E_{\bm{\theta}_0}\sbr{\sup_{\bm{\theta} \in B(\delta_0,\bm{\theta}_0)} \abs{s_i(\bm{\theta})}} \le g_i(\bm{\theta}_i^*) + E_{\bm{\theta}_0}[h_i(Y)]< \infty,
  \end{align*}
  which is sufficient for \ref{assmp:domination_A}.
  
  \ref{assmp:Fisher_A} states standard regularity conditions on the Fisher information matrix. Overall \ref{assmp:domination_A} and \ref{assmp:Fisher_A} are mild. They hold for commonly used continuous distributions, including those in the exponential family. 
\end{remark}

\begin{remark}\label{rmk:BvM}
	\ref{assmp:BvM} is regularity condition on the MLE and the BvM property. By Lemma \ref{lem:l1}, this is sufficient for \ref{assmp:pc}.
\end{remark}

\begin{remark}\label{rmk:estimator}
	\ref{assmp:estimator} states a standard asymptotic expansion of the MLE; see, for example, Theorem 5.39 in \cite{van2000asymptotic}. This theorem, together with discussion in Remark \ref{rmk:domination}, suggests that a sufficient condition for \ref{assmp:estimator} is to have the functions $g_j$ and $h_j$ such that equation \eqref{eq:domination} holds, and $E[h_j(Y)^2] < \infty$. Consequently, any estimator that is asymptotically equivalent to the MLE can be used as $\hat{\bm{\theta}}_n$, including Bayesian estimators such as the posterior mean under \ref{assmp:BvM}.
\end{remark}

Following Remark \ref{rmk:BvM}, to prove Theorem \ref{thm:CKS}, it suffices to verify that the CKS statistic satisfies equation \eqref{eq:thm_iid_c_step1} in Lemma \ref{lem:l2l3}. We begin with a Taylor expansion:
\begin{align*}
     Z_n(y)=\sqrt{n}(P_n(y)- P_{\hat{\bm{\theta}}_n}(y)) = \sqrt{n}(P_n(y)- P_{\bm{\theta}_0}(y)+\dot{P}_{\bm{\theta}_0}(y)^\top (\bm{\theta}_0-\hat{\bm{\theta}}_n)) + r_n(y).
\end{align*}
Fréchet differentiability of $P_{\bm{\theta}}$ at $\bm{\theta}_0$ ensures that the remainder term $r_n(y)=o_p(1)$ uniformly in $y$, and hence it does not affect the asymptotic distribution of $Z_n$. Using Le Cam's third lemma, we show that $Z_n(y)$ satisfies equation \eqref{eq:thm_iid_c_step1} for each fixed $y \in \mbR$. We then establish asymptotic tightness of $Z_n$ in $l^\infty(\mbR)$, which ensures that equation \eqref{eq:thm_iid_c_step1} holds uniformly in $y$, and consequently for the CKS statistic $K_n = \sup_{y \in \mbR} \abs{Z_n(y)}$. See Appendix \ref{app:CKS} for the full proof.

\subsection{The generalized Kolmogorov-Smirnov statistic}

Next, we present sufficient conditions for the ppp(GKS) to be asymptotically well calibrated under the regression setting, where each observation $Y_i$ is associated with known covariate vector $\Bx_i \in \mcX \subset \mbR^p$. Let $P_{i,\bm{\theta}}$, $i=1,\dots,n$ be probability measures on $(\mbR_i,\mcB_i)$, with $(\mbR_i,\mcB_i)=(\mbR,\mcB)$. We assume they are all dominated by the Lebesgue measure and let $p_{i,\bm{\theta}}$ denote the density. Set $(\mcY,\mcA)=\prod_{i=1}^\infty (\mbR_i,\mcB_i)$ and let $P_{\bm{\theta}}$ be the product measure of $P_{i,\bm{\theta}}$'s induced on $\mcA$. Let $Y_i$'s be coordinate random variables, it then follows that they are independently distributed. Further define $\mcA_n =\sigma(Y_1,\dots,Y_n)$ as the $\sigma$-algebra induced by the random variables $Y_1,\dots,Y_n$, and let $P_{\bm{\theta}}^n$ denote the restriction of $P_{\bm{\theta}}$ to $\mcA_n$. Consider testing for: $H: Y_i \mid \Bx_i \sim P_{i,\bm{\theta}}$, $\bm{\theta} \in \Theta \subset \mbR^k$, with $P_{i,\bm{\theta}}(y)=F(y; \Bx_i, \bm{\theta})$, for some parametric CDF $F$, with densities $p_{i,\bm{\theta}}(y)=f(y;\Bx_i,\bm{\theta})$. Let $Q(u;\Bx_i,\bm{\theta})=F(\cdot;\Bx_i,\bm{\theta})^{-1}(u)$, $u \in (0,1)$, denote the corresponding quantile function. We define functions $h_{i,\bm{\theta},u}: \mbR_i \mapsto \{-1,0,1\}$,
\begin{align*}
  h_{i,\bm{\theta},u}(y_i) := 1(F(y_i;\Bx_i,\bm{\theta}) \le u) - 1(F(y_i;\Bx_i,\bm{\theta}_0) \le u),
\end{align*}
and coordinate-wise vector function $h_{\bm{\theta},u}= (h_{1,\bm{\theta},u}, \dots, h_{n,\bm{\theta},u}): \mbR^n \mapsto \{-1,0,1\}^n$. For any $0<\delta<\delta_0$, define class of vector functions: 
\begin{align}\label{eq:Hn}
  \mcH_n(\delta) &:= \{ h_{\bm{\theta},u}: \bm{\theta} \in B(\delta,\bm{\theta}_0), u \in [0,1] \}.
\end{align}
When the dimension $n$ is clear from context, we write $\mcH(\delta)$ for simplicity. Let $\hat{d}_n$ denote the empirical $L_2$ semi-metric,
\begin{align*}
  \hat{d}_n(h_{\bm{\theta}, u},h_{\bm{\theta}', u'}) = \cbr{\frac 1n \sum_{i=1}^n (h_{i,\bm{\theta},u}(Y_i)-h_{i,\bm{\theta}',u'}(Y_i))^2}^{1/2}.
\end{align*}
For any $\epsilon>0$, a collection of functions $\mcF$ and a semi-metric $d$ on $\mcF$, $N(\epsilon,\mcF,d)$ denotes the covering number of $\mcF$ under $d$. We use $E^*$ for expectation in $P_{\bm{\theta}_0}$ outer probability, and $\bar{E}$ for expectation in the completion of $P_{\bm{\theta}_0}$. We further assume:

\begin{enumerate}[label=A\arabic{*}., ref=A\arabic{*}, start=7]
  \item\label{assmp:supp_B} For all $\Bx \in \mcX$, the support of $f$, $\{y \in \mbR; f(y;\Bx,\bm{\theta})>0 \}$, is independent of $\bm{\theta}$.
  \item\label{assmp:diff_B} For all $y \in \mbR$ and $\Bx \in \mcX$, $f(y;\Bx,\bm{\theta})$ is continuously differentiable in $\bm{\theta}$. The vector-valued functions $g_i(u,\bm{\theta})$ defined below exist and can be extended to continuous functions in $(u,\bm{\theta})$ on $[0,1] \times \bar{B}(\delta_0,\bm{\theta}_0)$ uniformly in $i$:
  \begin{align*}
      g_i(u,\bm{\theta}) := \dot{F}(Q(u;\Bx_i,\bm{\theta});\Bx_i,\bm{\theta}), \quad \text{where } \dot{F}(y;\Bx_i,\bm{\theta})=\frac{\partial F(y;\Bx_i,\bm{\theta})}{\partial \bm{\theta}}.
  \end{align*}
  And there exists $g(u)$ such that $1/n \sum_{i=1}^n g_i(u,\bm{\theta_0}) \to g(u)$ uniformly in $u$.
  \item\label{assmp:domination_B} There exists $\delta_0>0$ such that $E[\sup_{\bm{\theta} \in B(\delta_0,\bm{\theta}_0)} \norm{s_i(\bm{\theta})}] < \infty$ for all $i$.
  \item\label{assmp:Fisher_B} For all $i$, elements of the Fisher information matrix $\BI_i(\bm{\theta})=E[s_i(\bm{\theta})s_i(\bm{\theta})^\top]$ are well defined, continuous in $\bm{\theta}$. For all $\bm{\theta} \in \Theta$, $\BI_i(\bm{\theta})$'s are non-singular, and there exists positive definite matrix $\BI(\bm{\theta}) \in \mbR^{k \times k}$ such that $1/n \sum_{i=1}^n \BI_i(\bm{\theta}) \to \BI(\bm{\theta})$.
  \item\label{assmp:ui} There exists $\gamma>0$ such that $\sup_{i \in \mbN} E\sbr{\norm{s_i(\bm{\theta}_0)}^{2+\gamma}} < \infty$.
  \item\label{assmp:entropy} There exist $K_1$, $K_2>0$ such that $N(\epsilon,\mcH_n(\delta),\hat{d}_n) \le K_1 n^{K_2}$ for all $\epsilon>0$, and $\delta \in (0,\delta_0)$.
\end{enumerate}

\begin{theorem}\label{thm:GKS}
  Under \ref{assmp:BvM}-\ref{assmp:entropy}, the ppp(GKS) is asymptotically well calibrated.
\end{theorem}

\begin{remark}
    Some of the assumptions of Theorem \ref{thm:GKS} parallel those of Theorem \ref{thm:CKS}. In particular, \ref{assmp:BvM} and \ref{assmp:estimator} remain the same, whereas \ref{assmp:supp_B}-\ref{assmp:Fisher_B} strengthen \ref{assmp:supp_A}-\ref{assmp:Fisher_A} by imposing additional regularity conditions on behavior across observations. We highlight that for independent but non-identically distributed random variables, Fréchet differentiability of the maps $\bm{\theta} \mapsto P_{i,\bm{\theta}}$ is insufficient for the proof. This motivates the conditions imposed in \ref{assmp:diff_B}. As shown in Proposition \ref{prop:frechet}, these are stronger assumptions than Fréchet differentiability. 

    For situations where \ref{assmp:supp_A}-\ref{assmp:Fisher_A} hold, these additional conditions are typically satisfied if it is fitting to assume $(\BX_i,Y_i)$'s are independently and identically distributed. 
\end{remark}

\begin{remark}
    \ref{assmp:ui} is needed to invoke Theorem 3.1 in \cite{philippou1973asymptotic} to obtain asymptotic normal expansion of the log-likelihood ratio for independent but not identically distributed observations.
\end{remark}

In Propositions \ref{prop:location_scale} and \ref{prop:o-minimal}, we present several sufficient conditions for \ref{assmp:entropy}. 

\begin{proposition}\label{prop:location_scale}
    \ref{assmp:entropy} holds if $F$ is a location-scale family and
    \begin{enumerate}[label=(\roman*)]
        \item the regression term $\Bx^\top \bm{\beta}$ is only in the location parameter: $F(y;\Bx,\bm{\beta},\sigma,\bm{\nu}) = G_{\bm{\nu}} [(y-h(\Bx^\top \bm{\beta}))/\sigma]$, or
        \item the regression term $\Bx^\top \bm{\beta}$ is only in the scale parameter: $F(y;\Bx,\bm{\beta},\mu,\bm{\nu}) = G_{\bm{\nu}} [(y-\mu)/h(\Bx^\top \bm{\beta})]$,
    \end{enumerate}
    for some CDF $G$ that is absolutely continuous with respect to Lebesgue measure, parameterized by $\bm{\nu} \in \mbR^d$, and strictly monotone function $h$.
\end{proposition}

\begin{remark}
    Proposition \ref{prop:location_scale} covers generalized linear models (GLMs) such as normal linear regression, Student-t regression, Gamma GLM and Weibull GLM. Lognormal regression can also be covered with a slight modification to the proof in Appendix \ref{app:lem_prop}. However, it does not cover those that are not location-scale families such as Beta regression and inverse Gaussian GLM -- they are covered by Proposition \ref{prop:o-minimal}.
\end{remark}

\begin{proposition}\label{prop:o-minimal}
    \ref{assmp:entropy} holds if for any compact $A \subset \mbR$, such that $[0,1] \subset A$ and $B(\delta,\bm{\theta}_0) \subset A^k$, the collection of sets $\mcS_A$ defined below is a subset of a uniformly definable family in an o-minimal structure.
    \begin{align*}
      \mcS_A = \{ \{(\Bx,y) \in A^p \times A: F(\Bx,y,\bm{\theta}) \le u\}, \bm{\theta} \in B(\delta,\bm{\theta}_0), u \in [0,1] \}.
    \end{align*}
\end{proposition}

\begin{remark}
    Proposition \ref{prop:o-minimal} relies on o-minimality theory and sample compression schemes. We defer definitions, references and the proof to Appendix \ref{app:lem_prop}. As formally stated in Corollary \ref{coro:regression} below, this condition holds for all the common regression models under continuous distributions. See Appendix \ref{app:coro} for the proof.
\end{remark}

\begin{corollary}\label{coro:regression}
    Under assumptions \ref{assmp:BvM}-\ref{assmp:ui}, common regression models with continuous distributions satisfy conditions in Propositions \ref{prop:location_scale} or \ref{prop:o-minimal} such that ppp(GKS) is asymptotically well calibrated. This includes but not limited to: normal linear regression, Student-t regression, Lognormal regression, Gamma GLM, Weibull GLM, Beta GLM, and inverse Gaussian GLM.
\end{corollary}

The proof of Theorem \ref{thm:GKS} follows the same strategy as that of Theorem \ref{thm:CKS}, but is technically more involved. The difficulty arises from the process
\begin{align*}
    \tilde{Z}_n(u) = \sqrt{n} \cbr{u - \frac 1n \sum_{i=1}^n 1(F(Y_i;\Bx_i,\hat{\bm{\theta}}_n) \le u)},
\end{align*}
which depends on $\hat{\bm{\theta}}_n$ through discontinuous indicator functions. Consequently, Taylor expansion is not applicable. Instead, we decompose $\tilde{Z}_n(u)$ as
\begin{align}\label{eq:GKS_decompose}
  &\tilde{Z}_n(u) = \underbrace{\frac 1{\sqrt{n}}\sum_{i=1}^n \cbr{u - 1(F(Y_i;\Bx_i,\bm{\theta}_0) \le u)}}_{\text{(a)}} + \underbrace{\frac 1{\sqrt{n}} \sum_{i=1}^n (m_i(\bm{\theta}_0, u) - m_i(\hat{\bm{\theta}}_n, u))}_{\text{(b)}} + \nonumber \\
  &\underbrace{\frac 1{\sqrt{n}} \sum_{i=1}^n (1(F(Y_i;\Bx_i,\bm{\theta}_0) \le u)- m_i(\bm{\theta}_0, u) - 1(F(Y_i;\Bx_i,\hat{\bm{\theta}}_n) \le u) + m_i(\hat{\bm{\theta}}_n, u))}_{\text{(c)}},
\end{align}
where $m_i(\bm{\theta},u):=F(Q(u;\Bx_i,\bm{\theta});\Bx_i,\bm{\theta}_0)$. The first step is to show that term $(c)=o_p(1)$ uniformly in $u$. To this end, let $\mbG_n$ denote the empirical process,
\begin{align*}
  \mbG_n(h_{\bm{\theta},u}) := \frac 1{\sqrt{n}} \sum_{i=1}^n [h_{i,\bm{\theta},u}(Y_i) - E h_{i,\bm{\theta},u}(Y_i)].
\end{align*}
Let $\{ \delta_n \} \subset (0,\delta_0)$ be a decreasing sequence satisfying $\delta_n \downarrow 0$, and $\sqrt{n}\delta_n \to \infty$: For example, we will use $\delta_n = O(\log(n)/\sqrt{n})$ in the proofs. Then under regularity assumptions, term (c) is $o_p(1)$ uniformly in $u$ if $E^*[\norm{\mbG_n(h_{\bm{\theta},u})}_{\mcH(\delta_n)}] \to 0$. We establish this using standard empirical process tools such as symmetrization and Dudley's entropy integral theorem, and we bound the entropy numbers using \ref{assmp:entropy}. Then we show term (a)+(b) satisfies equation \eqref{eq:thm_iid_c_step1} of Lemma \ref{lem:l2l3} for each fixed $u$, and is asymptotically tight such that equation \eqref{eq:thm_iid_c_step1} holds for the GKS statistic $\tilde{K}_n = \sup_{u \in [0,1]}\abs{\tilde{Z}_n(u)}$. See Appendix \ref{app:GKS} for the full proof.

\section{Numerical experiments}\label{sec:sim}

We conducted simulation studies to evaluate the finite-sample performance of the ppp(CKS) and ppp(GKS). In Section \ref{sec:sim_gamma}, ppp(CKS) is used to assess a Gamma model; In Section \ref{sec:sim_gglm}, ppp(GKS) is applied to a Gamma GLM. 

We first study the null distributions. To examine the sensitivity of finite-sample performance to prior specification, we considered two sets of priors across a range of sample sizes. Specifically, we included reasonable priors such as weakly informative priors or priors loosely centered at the true parameter values, as well as misspecified informative priors that place the true parameters in the tails of the distributions. We further compared Bayesian and frequentist plug-in estimators to evaluate their impact on the resulting ppp. We then study the power of the ppp(CKS) and ppp(GKS). In each case, we simulated data from three alternative models --  Although specific alternative models were considered here, the goal was not model selection. Rather, we wanted to assess whether KS-type statistics are sensitive to different types of model misspecification, and how their performance compare with other test statistics such as the chi-squared test.

Our simulation results showed that both the ppp(CKS) and ppp(GKS) are well behaved under the null across different priors and estimators, and exhibit good power against a variety of alternative models.

\subsection{Gamma model}\label{sec:sim_gamma}

Consider using ppp(CKS) to test for $H$: $Y_i \sim$ i.i.d. $\mathrm{Gamma}(\alpha,\beta)$ (shape, rate parameterization). In order to study its null distribution, we simulated data from $\mathrm{Gamma}(\alpha_0=2,\beta_0=5)$, and we considered two estimators: the MLE, and the posterior mean, four sample sizes: $n=10,20,100,500$, and two sets of priors:
\begin{enumerate}[label=(\roman*)]
  \item Good priors: loosely centered at the true parameters
  \begin{itemize}
    \item $\pi(\alpha) \sim TN(2.5,16,0,\infty)$, a normal with mean 2.5, variance 16, truncated to the positive part of the real line;
    \item $\pi(\beta) \sim \mathrm{Gamma}(1,1)$.
  \end{itemize}
  \item Bad priors: chosen such that $\alpha_0$ and $\beta_0$ are roughly at the 97.5\% and 2.5\% percentiles respectively
  \begin{itemize}
    \item $\pi(\alpha) \sim TN(1,0.5,0,\infty)$;
    \item $\pi(\beta) \sim \mathrm{Gamma}(3,1.25)$.
  \end{itemize}
\end{enumerate}
\ref{assmp:supp_A}-\ref{assmp:estimator} hold under this setup, hence Theorem \ref{thm:CKS} applies, and ppp(CKS) is asymptotically well calibrated. To see why the assumptions are satisfied: \ref{assmp:supp_A} and the continuous differentiability of the density function in \ref{assmp:diff_A} obviously hold. It can then be shown that the condition in Proposition \ref{prop:frechet} holds which is sufficient for Fréchet differentiability. Gamma distribution belongs to the exponential family, and all the standard regularity conditions such as \ref{assmp:domination_A} and \ref{assmp:Fisher_A} hold; The MLEs are CAN estimators and admit the expansion in \ref{assmp:estimator}; All the priors we considered are continuous distributions with positive density over the entire parameter space, hence BvM theorem applies. Consequently, \ref{assmp:BvM} holds and the posterior means also satisfy the expansion in \ref{assmp:estimator}. See Appendix \ref{app:sim} for a detailed proof.

To carry out the numerical experiments, we simulated 1,000 datasets using $\alpha_0$ and $\beta_0$ for each sample size. Then for each simulated dataset and prior, we performed Bayesian model fitting in \texttt{rstan} to obtain 1,000 posterior samples (1,000 burn-in, followed by 5,000 iterations, thinned every 5 iterations). We then generated posterior predictive datasets, and computed the CKS test statistic for both the observed and posterior predictive datasets to approximate the ppp(CKS). Among these steps, posterior sampling is generally the most computationally demanding but is required for Bayesian inference regardless. The remaining steps are relatively straightforward for most parametric models. This illustrates the computational efficiency of the ppp and its natural compatibility with the Bayesian workflow.

Figure \ref{fig:gamma_level} presents the kernel density estimates of the sampling distribution of the ppp(CKS) under different sample sizes, priors and plug-in estimators. Under the good prior, the null distributions of the ppp(CKS) closely resemble uniform, even with a sample size as small as $n=10$. When the priors are misspecified, the posterior means are biased towards the prior, particularly for small sample sizes. This bias increases the discrepancy between the empirical distribution of the data and the fitted distribution, resulting in larger values of the CKS statistic. The discrepancy is more pronounced for the observed data than for the posterior predictive datasets, which leads to smaller $p$-values, as shown in the second plot. However, as $n$ increases to 100, these effects are largely washed away by the data. With $n=500$, the null distribution of the ppp(CKS) under the bad prior is again approximately uniform.

\begin{figure}
  \centering
  \includegraphics[width=15cm]{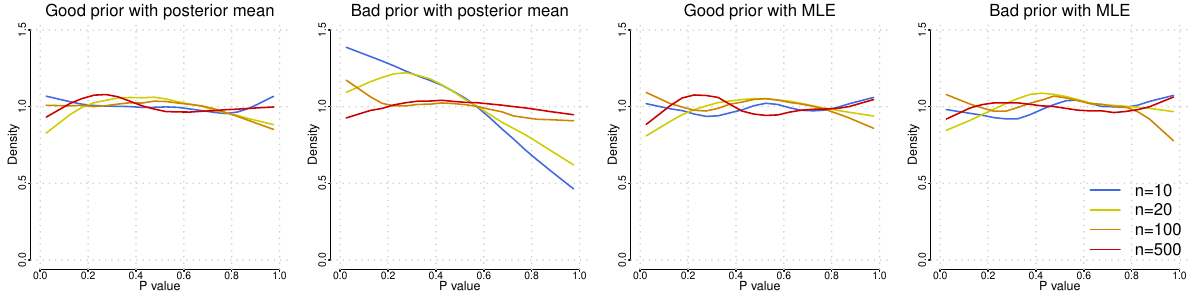}
  \caption{Gamma model example. Kernel density estimates of the null distributions of the ppp(CKS) under two priors, two estimators, and four sample sizes.}
  \label{fig:gamma_level}
\end{figure}

Using frequentist estimators such as the MLE in the CKS test may offer advantages over Bayesian estimators like the posterior mean. If the MLE can be easily estimated, it eliminates the need for Bayesian model fitting on the posterior predictive datasets, hence is more computationally efficient. Additionally, as shown in the last plot of Figure \ref{fig:gamma_level}, MLE is not affected by poor priors, which makes the results more robust to prior misspecification. For complex problems where the MLE is difficult to compute reliably, Bayesian estimators remain a viable alternative.

We further studied the power of the ppp(CKS) under three alternative models:
\begin{enumerate}[label=(\roman*)]
  \item Weibull model with data generated using shape parameter $\alpha_0$, and scale parameter $1/\beta_0$.
  \item Lognormal model with data generated using $\mu=0$, $\sigma=0.5$.
  \item Gamma GLM, 
  \begin{align}\label{eq:gamma_glm}
    Y_i \mid x_i \sim \mathrm{Gamma} \cbr{\alpha, \frac{\alpha}{x_i \theta + \alpha/\beta}},
  \end{align}
  with data generated using $\alpha = \alpha_0$, $\beta=\beta_0$, $\theta_0=0.5$, and $x_i$'s are known scalar covariates generated from Lognormal with parameters $\mu=0.5$, $\sigma=1$, then fixed for all datasets. Here the mean of $Y_i$ is $x_i \theta+\alpha/\beta$ instead of $\alpha/\beta$ under the Gamma model.
\end{enumerate}
We considered two additional test statistics for comparison with the CKS statistic. Let ($\hat{\alpha}_n$,$\hat{\beta}_n$) denote plug-in estimators for ($\alpha$,$\beta$):
\begin{enumerate}[label=(\roman*)]
  \item Chi-squared test, another popular omnibus test. Under $H$, it is defined as: $\sum_{i=1}^n (Y_i - \hat{\alpha}_n/\hat{\beta}_n)^2/[n\hat{\alpha}_n/\hat{\beta}_n^2]$. Deviation from the null is indicated by extreme values in either direction. Consequently, the corresponding $p$-value is two-sided, with both small and large values providing evidence against the null.
  \item Score test. The GLM in equation \eqref{eq:gamma_glm} is a larger parametric model $P_{\alpha,\beta,\theta}$ which contains the null model $P_{\alpha,\beta}$, and we recover $H$ when $\theta=0$. In this situation, \cite{robins2000asymptotic} and \cite{wang2021calibration} showed that the following score test exhibits particularly good power against the larger model:
  \begin{align*}
    \frac 1n \frac{\partial \ell_n(\hat{\alpha}_n,\hat{\beta}_n,\theta)}{\partial \theta} \Bigg|_{\theta=0} = \frac{\hat{\beta}_n^2}{\hat{\alpha}_n}\sum_{i=1}^n x_iY_i - \hat{\beta}_n \sum_{i=1}^n x_i,
  \end{align*}
  where $\ell_n$ denote the log-likelihood based on $n$ observations. Larger values suggest greater discrepancy between the null and the data.
\end{enumerate}
We used the good priors and $n=100$ for simulations. Figure \ref{fig:gamma_power} presents kernel density estimates of the sampling distribution of the ppp under these three test statistics across different models with the posterior mean as the plug-in estimator. Results with the MLE are very similar, hence omitted here.

\begin{figure}
  \centering
  \includegraphics[width=15cm]{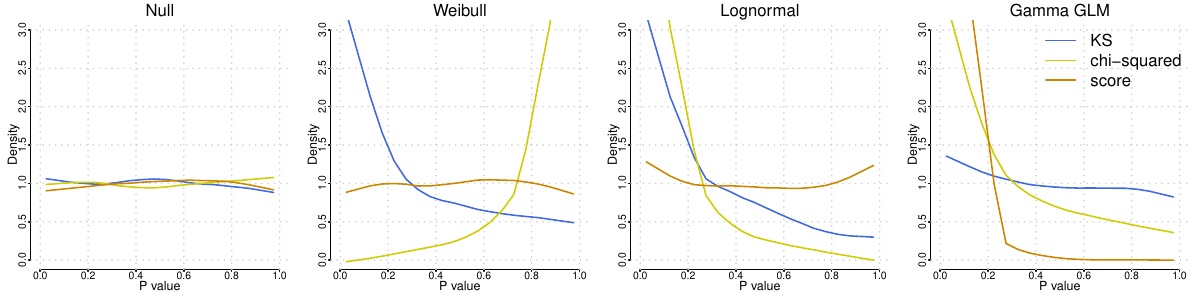}
  \caption{Gamma model example. Kernel density estimates of distributions of the ppp under different test statistics and data-generating models.}
  \label{fig:gamma_power}
\end{figure}
The first plot displays the null distributions, all of which are approximately uniform, indicating that the ppp is well-behaved under all three statistics. Under alternative models, we seek high power, which translates to right-skewed distributions under the CKS test or the score test, and either right-skewed or left-skewed distributions under the chi-squared test. The last plot shows results under the Gamma GLM. While all three test statistics are able to detect model misspecification, the score test is the most powerful as it is designed for this alternative. However, it lacks sensitivity to other types of alternatives as illustrated in the other two plots. This shows that, although omnibus statistics such as the CKS and the chi-squared statistics may not be most powerful against a specific alternative, they are general-purpose tools that can effectively detect a broad range of model misspecification. 

\subsection{Gamma GLM}\label{sec:sim_gglm}

Consider using the ppp(GKS) to test for $H$: $Y_i \mid \Bx_i \sim \mathrm{Gamma}(\alpha,b_i)$, $\log(\alpha/b_i)=\Bx_i^\top \bm{\beta}$, $\alpha \in \mbR$, $\bm{\beta} \in \mbR^k$. We used simulations to study the null distribution of the ppp(GKS), with $\alpha_0=2$, $p=6$. The covariates $x_{ij}$'s were generated from $N(0,1)$ except for the intercept column, and the regression coefficients $\beta_{0j}$'s were generated from $\mathrm{Unif}(-1,1)$. These values were then fixed for all the simulated datasets. We considered two estimators: the MLE, and the posterior mean, four sample sizes: $n=20,50,100,500$, and two sets of priors:
\begin{enumerate}[label=(\roman*)]
  \item Good priors: 
  \begin{itemize}
      \item $\pi(\bm{\theta}) \sim N(\mu,\sigma^2\BI)$, $\pi(\mu) \sim N(0,4)$: loosely centered at the true parameters.
      \item Half-Cauchy for both $\sigma^2$ and $\alpha$: weakly informative priors.
  \end{itemize}
  \item Bad priors:
  \begin{itemize}
      \item $\pi(\mu) \sim N(1.34,0.67)$: informative prior with the value $0$ at about 5 percentile;
      \item $\pi(\sigma^2) \sim \mathrm{IG}(3,14)$, $\pi(\alpha) \sim \mathrm{Gamma}(12,3)$: both set up so that the true parameters are at about 5 percentiles of the prior distributions.
  \end{itemize}
\end{enumerate}

Under regularity assumptions on $\Bx_i$'s, \ref{assmp:BvM}-\ref{assmp:entropy} hold, therefore Theorem \ref{thm:GKS} applies, and the ppp(GKS) is asymptotically well calibrated. Under our setup, it is plausible to view $(\BX_i,Y_i)$ as randomly drawn from the same distribution, therefore \ref{assmp:BvM}-\ref{assmp:Fisher_B} hold with similar arguments as in the last example. It can then be shown that \ref{assmp:ui} holds for the Gamma GLM, and \ref{assmp:entropy} holds by Corollary \ref{coro:regression}. See Appendix \ref{app:sim_gglm} for regularity assumptions on $\Bx_i$'s and the proof.

Following the same setup as in the Gamma model example, we conducted numerical experiments using 1,000 simulated datasets. Figure \ref{fig:gglm_level} shows the kernel density estimates of the null distributions of the ppp(GKS) under different sample sizes, priors and plug-in estimators.

Observations are similar to those from the Gamma model. Under the good prior, the null distributions of the ppp(GKS) closely resemble uniform, even with a sample size as small as $n=20$. When the priors are misspecified, the null distributions are distorted. But as $n$ increases, these effects are largely washed away by the data. Frequentist estimators are not affected by bad priors, hence results using the MLE are more robust to prior misspecification. However, as we shall see below, we might not always be able to reliably compute the frequentist estimators.

\begin{figure}
  \centering
  \includegraphics[width=15cm]{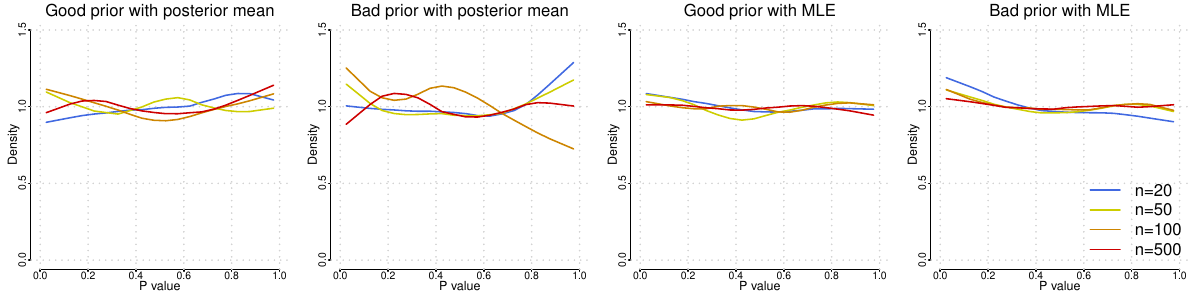}
  \caption{Gamma GLM example. Kernel density estimates of the null distributions of the ppp(GKS) under two priors, two estimators, and four sample sizes.}
  \label{fig:gglm_level}
\end{figure}

We further studied the power of the ppp(GKS) under three alternative models:
\begin{enumerate}[label=(\roman*)]
  \item Wrong link where a truncated log link $\log(m_i) = \min(0,\Bx_i^\top\bm{\beta})$ was used to generate the data.
  \item Lognormal model with data generated using $\mu=\Bx_i^\top \bm{\beta}$, $\sigma=\log(1/\alpha+1)$. These parameters were setup so that the first two moments are roughly the same as under the null.
  \item Missing covariate where the true mean is $\log(\alpha/b_i)=\Bx_i^\top \bm{\beta} + z_i \theta$, with $\theta=0.1$, for some known covariates $z_i$'s. 
\end{enumerate}
Similar to the Gamma example, we considered two other test statistics for comparison with the GKS statistic. Let ($\hat{\alpha}_n$, $\hat{\bm{\beta}}_n$) denote plug-in estimators for ($\alpha$, $\bm{\beta}$):
\begin{enumerate}[label=(\roman*)]
  \item Chi-squared test: $\sum_{i=1}^n [Y_i/\exp(\Bx_i^\top \hat{\bm{\beta}}_n)-1]^2\hat{\alpha}_n/n$. The corresponding $p$-value is two-sided.
  \item Score test. Taking alternative model (iii) as a larger parametric model, we can derive a test statistic with particularly good power against the embedding model: $\sum_{i=1}^n \hat{\alpha}_n z_i [Y_i/\exp(\Bx_i^\top \hat{\bm{\beta}}_n)-1]$.
\end{enumerate}
We used the posterior mean as the plug-in estimator, the good priors and $n=100$ for simulations. We did not use the MLE because estimation can be unstable and may even fail to converge for posterior predictive datasets under alternative models. This highlights the robustness of Bayesian estimators to plug in the KS-type statistics.

Figure \ref{fig:gglm_power} reports kernel density estimates of the sampling distribution of the ppp under these test statistics across different models. From the first plot, we can see that the ppp is well behaved under all three test statistics under the null. The last plot shows results under the missing covariate model. Because we set $\theta=0.1$, the signal is rather weak, and consequently both the GKS and the chi-squared statistics are not able to detect the wrong model, while the score test designed for this alternative model is the most powerful. However, the score test is not sensitive to other types of model misspecification, while the GKS and the chi-squared statistics are able to detect the Lognormal model, and the model with wrong link. 

\begin{figure}
  \centering
  \includegraphics[width=15cm]{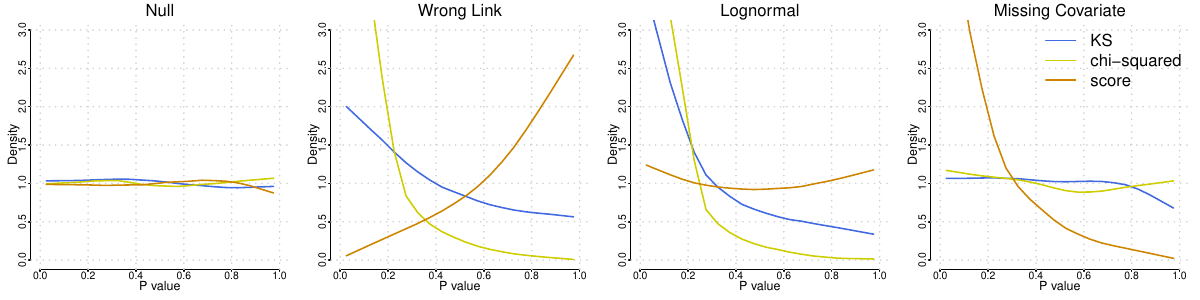}
  \caption{Gamma GLM example. Kernel density estimates of distributions of the ppp under different test statistics and data-generating models.}
  \label{fig:gglm_power}
\end{figure}

\section{Discussion}\label{sec:dis}

The posterior predictive $p$-value is a convenient tool for Bayesian model evaluation, but its conservatism has raised concerns among practitioners. \cite{robins2000asymptotic} showed that the ppp can be asymptotically well calibrated for a class of asymptotically normal test statistics, provided the asymptotic mean of the test statistic is constant in the parameter $\bm{\theta}$. Our work extends this theory to Kolmogorov-Smirnov (KS)-type test statistics: The classical KS (CKS) test is applicable to i.i.d. real-valued random variables, whereas the generalized KS (GKS) test can be used for regression setup. They are omnibus goodness-of-fit tests that are not asymptotically normal. We leverage their connection to asymptotic normality to establish the asymptotic well-calibration of the ppp through Le Cam's third lemma and empirical process theory. Moreover, our Proposition \ref{prop:paper5thm3.2} provides a pathway for potentially extending these results to other test statistics that are not asymptotically normal. We further used numerical experiments to show that both the ppp(CKS) and ppp(GKS) are well behaved under finite sample, and that they are versatile to detect a variety of alternative models.

Over the years, several alternative Bayesian $p$-value procedures have been developed to address the conservatism issue of the ppp. \cite{bayarri2000p} introduced the conditional predictive $p$-value and the partial posterior predictive $p$-value which are asymptotically well calibrated, as demonstrated by \cite{robins2000asymptotic}; \cite{hjort2006post} proposed calibrating the ppp using the prior predictive distribution, while \cite{wang2021calibration} suggested calibration with the posterior predictive distribution to retain asymptotic uniformity; More recently, \cite{moran2019population} and \cite{li2022calibrated} independently proposed using separate subsets of the data to compute the posterior and measure model discrepancy, addressing the double-dipping issue of the ppp. Although these alternative procedures often improve calibration and reduce conservatism, which typically leads to higher power, they generally do so at the cost of increased implementation complexity, higher computational burden, or reduced efficiency in information usage due to data splitting. With this article, we highlight the important role that test statistics play in the properties of $p$-values. For statistics such as the CKS and GKS tests, and those discussed in \cite{robins2000asymptotic}, practitioners can confidently use the ppp without resorting to more sophisticated procedures. It remains an active area of research to develop new procedures and characterize test statistics that yield asymptotically well-calibrated Bayesian $p$-values with good power.

\section*{Supplement}
Codes for replicating the simulation results are available at \url{https://github.com/christineymshen/BME_KS}

\bibliographystyle{plainnat}
\bibliography{ref}

\newpage
\begin{appendix}

\section{Proofs of Lemmas and Propositions}\label{app:lem_prop}

We first state several supporting lemmas. Lemmas \ref{lem:cdf_conv}-\ref{lem:hole} are standard results, hence the proofs are omitted.

\begin{lemma}\label{lem:cdf_conv}
    Let $\{G_n\}$ be a sequence of CDFs converging pointwise to a continuous CDF $G$. The convergence is also uniform, i.e., $\norm{G_n-G}_\infty \to 0$.
\end{lemma}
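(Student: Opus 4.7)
This is the classical Pólya theorem on uniform convergence of distribution functions. The plan is to exploit continuity of the limit $G$ to reduce the supremum over all of $\mbR$ to a maximum over a finite grid, where pointwise convergence already controls the error.

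First, I would fix $\epsilon > 0$ and use continuity of $G$ together with $G(-\infty)=0$ and $G(+\infty)=1$ to select finitely many grid points $t_1 < t_2 < \dots < t_{K-1}$ satisfying $G(t_1) < \epsilon$, $1 - G(t_{K-1}) < \epsilon$, and $G(t_i) - G(t_{i-1}) < \epsilon$ for every intermediate $i$. Continuity is exactly what makes such a partition possible; without it a jump of size larger than $\epsilon$ could not be subdivided. Then pointwise convergence $G_n(t_i) \to G(t_i)$ at these finitely many points yields an $N$ such that $\abs{G_n(t_i) - G(t_i)} < \epsilon$ uniformly in $i$ for all $n \ge N$.

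Next, for arbitrary $t \in \mbR$, I would locate the interval $[t_{i-1}, t_i]$ containing $t$ and use monotonicity of both $G_n$ and $G$ to sandwich
\begin{align*}
G(t_{i-1}) - \epsilon \;\le\; G_n(t_{i-1}) \;\le\; G_n(t) \;\le\; G_n(t_i) \;\le\; G(t_i) + \epsilon,
\end{align*}
and combine this with $G(t_{i-1}) \le G(t) \le G(t_i)$ and $G(t_i) - G(t_{i-1}) < \epsilon$ to conclude $\abs{G_n(t) - G(t)} \le 2\epsilon$. The tail intervals $(-\infty, t_1]$ and $[t_{K-1}, \infty)$ are handled analogously, using the smallness of $G(t_1)$ and $1 - G(t_{K-1})$ together with $0 \le G_n, G \le 1$ to bound both functions directly. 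Since $\epsilon$ is arbitrary, this gives the claimed uniform bound.

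The argument poses no real obstacle; the only mild care needed is at the tails, where one cannot invoke pointwise convergence at $\pm\infty$ and must instead rely on the range of any CDF being $[0,1]$ combined with the boundary values at the chosen grid points. Continuity of $G$ is the lone structural hypothesis doing the work, and it enters only through the ability to subdivide into small increments.
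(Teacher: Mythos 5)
Your argument is correct: it is the standard proof of P\'olya's theorem, and the paper itself omits a proof of this lemma precisely because it is a classical result. The grid construction via continuity, the pointwise-to-finite-maximum reduction, the monotone sandwiching on interior intervals, and the separate treatment of the tails using $0 \le G_n, G \le 1$ are all handled properly, so there is nothing to add.
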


\begin{lemma}\label{lem:uc}
    Let $\{g_n\}$ be a sequence of real functions converging uniformly to a real function $g$, $\{t_n\}$ be a sequence of real numbers converging to $t$. If $g$ is continuous, then $g_n(t_n) \to g(t)$.
\end{lemma}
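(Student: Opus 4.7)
The plan is a routine $\epsilon$-argument based on the triangle inequality. First I would split
\[
\abs{g_n(t_n) - g(t)} \le \abs{g_n(t_n) - g(t_n)} + \abs{g(t_n) - g(t)},
\]
so that it suffices to show each term on the right tends to zero.

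Next I would handle the two pieces separately. For the first term, I would bound $\abs{g_n(t_n) - g(t_n)} \le \sup_{s \in \mbR} \abs{g_n(s) - g(s)}$, and this supremum tends to zero by the hypothesis that $g_n \to g$ uniformly; the key point is that this pointwise-in-$n$ bound holds regardless of where $t_n$ happens to sit. For the second term, continuity of $g$ at $t$ together with $t_n \to t$ yields $g(t_n) \to g(t)$. Combining the two, for any $\epsilon > 0$ I would choose $N_1$ such that $\sup_{s} \abs{g_n(s) - g(s)} < \epsilon/2$ for all $n \ge N_1$, and $N_2$ such that $\abs{g(t_n) - g(t)} < \epsilon/2$ for all $n \ge N_2$; then $\abs{g_n(t_n) - g(t)} < \epsilon$ for every $n \ge \max(N_1, N_2)$, which is the claim.

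Since this is a standard real-analysis fact, I do not anticipate any technical obstacles. The only structural point worth flagging is why both hypotheses are used: the uniform convergence of $g_n$ to $g$ is what lets us evaluate $g_n - g$ at the moving point $t_n$ (pointwise convergence would not suffice, as a ``traveling bump'' example would defeat it), while continuity of the limit function $g$ at $t$ is what lets us pass $t_n \to t$ through $g$. Each hypothesis controls exactly one of the two summands in the triangle-inequality decomposition above.
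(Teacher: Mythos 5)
Your argument is correct and complete: the triangle-inequality split, with uniform convergence controlling $\abs{g_n(t_n)-g(t_n)}$ and continuity of $g$ at $t$ controlling $\abs{g(t_n)-g(t)}$, is exactly the standard proof of this fact. The paper omits the proof entirely (it lists this as a standard supporting lemma), so there is nothing to compare against; your write-up, including the remark on why each hypothesis is needed, would serve as a perfectly adequate proof.
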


\begin{lemma}\label{lem:hole}
    Let $X_n$ and $Y_n$ be sequences of real random variables. Let $G_n$ and $F_n$ be their CDFs. If $X_n$ converge in distribution to a random variable $X$ with continuous CDF $G$, then $E \abs{X_n-Y_n} \to 0 \implies \norm{G_n-F_n}_\infty \to 0$.
  
\end{lemma}

\begin{lemma}\label{lem:ias}
    For any $0<\delta<\delta_0$, $\mcH_n(\delta)$ defined in equation \eqref{eq:Hn} is image admissible Suslin (IAS), therefore $P_{\bm{\theta}}^n$-measurable. The same properties hold for $\mcH^2_n(\delta)$ defined below.
\begin{align}\label{eq:Hn2}
  \mcH_n^2(\delta) &:= \{ h_{\bm{\theta},u}^2 = (h_{1,\bm{\theta},u}^2, \dots, h_{n,\bm{\theta},u}^2); \bm{\theta} \in B(\delta,\bm{\theta}_0), u \in [0,1] \}.
\end{align}
    
\end{lemma}

\begin{proof}
    First we define Suslin space, Suslin sets and the IAS property following \cite{dudley2006course}. See the same paper for more details and references. 

    A separable measurable space $(Y,\mcS)$ is called a Suslin space if and only if there is a Polish space $X$ with Borel $\sigma$-algebra $\mcB$ and a measurable map from $X$ onto $Y$. A subset of a measurable space $Z$ is called a Suslin set if and only if it is a Suslin space with the relative $\sigma$-algebra. Given a measurable space $(X,\mcB)$, a collection $\mcF$ of measurable real functions on $X$ is called IAS via $(Y,\mcS,T)$ if and only if:
    \begin{enumerate}[label=(\roman*)]
        \item $(X,\mcB)$ and $(Y,\mcS)$ are Suslin measurable spaces;
        \item $T$ maps $Y$ onto $\mcF$, and 
        \item $\langle x,y \rangle \mapsto T(y)(x)$ is jointly measurable.
    \end{enumerate}

    Now consider $(\mcY_n,\mcA_n):= \prod_{i=1}^n(\mbR_i,\mcB_i)$, $\mcZ:=\bar{B}(\delta_0,\bm{\theta}_0) \times [0,1]$, and map $T:\mcZ \mapsto \mcH_n(\delta)$, $T(\bm{\theta},u)=h_{\bm{\theta},u}$. Both $(\mcY_n,\mcA_n)$ and $(\mcZ,\mcB(\mcZ))$ are complete separable metric spaces, hence both are Suslin spaces. For any $\By_n \in \mcY_n$, the evaluation map $\langle \By_n,(\bm{\theta},u) \rangle \mapsto h_{\bm{\theta},u}(\By_n)$ is jointly Borel measurable. Therefore $\mcH_n(\delta)$ is IAS via $(\mcZ,\mcB(\mcZ),T)$.

    Next, we show that $\mcH_n(\delta)$ is $P_{\bm{\theta}}^n$-measurable, i.e., for any $e_1,\dots,e_n \in \mbR$, 
    \begin{align*}
        (Y_1,\dots,Y_n) \mapsto \sup_{h_{\bm{\theta},u} \in \mcH_n(\delta)}\abs*{\sum_{i=1}^n e_i h_{i,\bm{\theta},u}(Y_i)}
    \end{align*}
    is measurable in the completion of $(\mcY_n,\mcA_n,P_{\bm{\theta}}^n)$. For any $t \ge 0$, and $e_1,\dots,e_n \in \mbR$, consider collection of sets:
    \begin{align*}
        E:=\left \{ \langle \By_n, (\bm{\theta},u) \rangle \in \mcY_n \times \mcZ: \abs*{\sum_{i=1}^n e_i h_{i,\bm{\theta},u}(y_i)} >t \right \}.
    \end{align*}
    As $\mcH_n(\delta)$ is IAS, and this is a finite sum, the map $\langle \By_n, (\bm{\theta},u) \rangle \mapsto \abs{\sum_{i=1}^n e_i h_{i,\bm{\theta},u}(y_i)}$ is jointly measurable. Therefore $E$ is product measurable, and thus Suslin. Hence its projection:
    \begin{align*}
        \left \{ \By_n \in \mcY_n: \sup_{h_{\bm{\theta},u} \in \mcH_n(\delta)}\abs*{\sum_{i=1}^n e_i h_{i,\bm{\theta},u}(y_i)} >t \right \}
    \end{align*}
    is also Suslin. Consequently, it's $P_{\bm{\theta}}^n$-measurable because all Suslin sets in a separable measurable space are universally measurable \citep{dudley2006course}. The same arguments also apply to $\mcH_n^2(\delta)$.
\end{proof}

\begin{proof}[Proof of Proposition \ref{prop:location_scale}]
    To bound $N(\epsilon,\mcH_n(\delta),\hat{d}_n)$ universally, it suffices to bound the number of unique length-$n$ binary vectors realized by $h_{\bm{\theta},u}$, as $(\bm{\theta},u)$ vary in $B(\delta,\bm{\theta}_0) \times [0,1]$, given any $I=\{(\Bx_1,y_1),\dots,(\Bx_n,y_n)\}$. Recall the coordinate functions $h_{i,\bm{\theta},u}(y_i) = 1(F(y_i;\Bx_i,\bm{\theta}) \le u)-1(F(y_i;\Bx_i,\bm{\theta}_0) \le u)$. Let $h_{i,\bm{\theta},u}^1(y_i)=1(F(y_i;\Bx_i,\bm{\theta}) \le u)$ be just the first term, and let $h_{\bm{\theta},u}^1$ denote the corresponding vector function. It is sufficient to show that the number of unique labels from $h_{\bm{\theta},u}^1$, denoted $N_1$, can be upper bounded by $K_1 n^{K_2}$ for some $K_1$, $K_2>0$. This is because the number of unique labels from the second term, denoted $N_2$ is obviously no larger than $N_1$, and the total number of unique labels from $h_{\bm{\theta},u}$ can be upper bounded by $N_1N_2 \le N_1^2$.

    Consider the following collection of sets
    \begin{align}\label{eq:S}
      \mcS = \{ \{(\Bx,y) \in \mcX \times \mbR: F(\Bx,y,\bm{\theta}) \le u\}, \bm{\theta} \in B(\delta,\bm{\theta}_0), u \in [0,1] \}.
    \end{align}   
    Given any finite set $I$ defined above, $N_1=\abs{\mcS \cap I}$, where $\mcS \cap I = \{ s \cap I: s \in \mcS \}$. And under the first scenario stated in Proposition \ref{prop:location_scale}, $\mcS$ is equivalent to
    \begin{align*}
        \mcS' := \{ \{(\Bx,y) \in \mcX \times \mbR: y-h(\Bx^\top \bm{\beta}) \le t\}, t \in \mbR,  \bm{\beta} \in B(\delta,\bm{\beta}_0) \}
    \end{align*}
    Fix any $t \in \mcR$, we start by studying $V(\mcS'_t)$, the VC dimension  of
    \begin{align*}
        \mcS'_t:= \{ \{(\Bx,y) \in \mcX \times \mbR: y-h(\Bx^\top \bm{\beta}) \le t\}, \bm{\beta} \in B(\delta,\bm{\beta}_0) \}.
    \end{align*}
    Without loss of generality, assuming $h$ is strictly monotone increasing, then $V(\mcS'_t)=V(\mcS'')$, where $\mcS'':=\{ \{(\Bx,z) \in \mcX \times \mbR: \Bx^\top \bm{\beta}-z \le 0\}, \bm{\beta} \in B(\delta,\bm{\beta}_0) \}$. To see why, suppose $\mcS'_t$ shatters $n$ points $\{(\Bx_1,y_1),\dots,(\Bx_n,y_n)\}$, then for any binary label $\Bl \in \{0,1\}^n$, there exists $\bm{\beta}_{\Bl}$ such that $l_i = 1(y_i-h(\Bx_i^\top \bm{\beta}_{\Bl}) \le t)$. Setting $z_i = h^{-1}(y_i-t)$, then obviously $\mcS''$ shatters $\{(\Bx_1,z_1), \dots, (\Bx_n,z_n)\}$, and vice versa.

    Notice $\mcS''$ is a collection of half-spaces in $\mbR^p \times \mbR$, therefore $V(\mcS'') \le p+1$. Hence by Sauer's lemma, there exists $K>0$ such that $\abs{\mcS'_t \cap I} \le K n^{p+1}$. Observe that for any one label realized in $\mcS'_t$, changing $t$ can at most create $n$ more labels because it is a thresholding parameter and it can flip at most 1 coordinate at a time. Therefore $N_1 \le K n^{p+1} \times (n+1) \le K' n^{p+2}$ for some $K'>0$.

    The proof for the second scenario stated in Proposition \ref{prop:location_scale} is similar, hence omitted.
    
\end{proof}

\begin{proof}[Proof of Proposition \ref{prop:o-minimal}]

    It suffices to show that for any such compact $A \subset \mbR$, $\mcS_A$ has a finite VC dimension $V$ constant in $n$. To see why, as discussed in the proof of Proposition \ref{prop:location_scale}, given any $I=\{(\Bx_1,y_1),\dots,(\Bx_n,y_n)\}$, in order to universally bound $N(\epsilon,\mcH_n(\delta),\hat{d}_n)$, it is sufficient to bound the number of unique length-$n$ binary vectors realized by $h_{\bm{\theta},u}^1$ as $(\bm{\theta},u)$ vary in $B(\delta,\bm{\theta}_0) \times [0,1]$, which is precisely $\abs{\mcS_A \cap I}$, for any compact $A$ such that $I \subset A^p \times A$. Therefore if $\mcS_A$ has VC dimension $V$, by Sauer's lemma, there exists $K>0$ such that $\abs{\mcS_A \cap I} \le K n^V$.

    We will show that any collection of sets is a VC class if it is a uniformly definable family in an o-minimal structure. We start by defining relevant terminologies. Let $L$ be a first order language consisting of $\{<,+,-,\cdot,0,1 \}$ and symbols of a collection of functions $\mcF$. Let $\mcM$ be an $L$-structure with domain $M$. Let $\psi(\Bz; \Bc)$ be a partitioned $L$-formula, with free variables $\Bz \in M^r$, and parameters $\Bc \in M^q$. We write $\mcM \models \psi(\Bz; \Bc)$ if $\psi(\Bz; \Bc)$ is true in $\mcM$. For any $B \subset M^r$, define $\psi(B;\Bc) = \{ \Bz \in B: \mcM \models \psi(\Bz;\Bc) \} \subset B$, then $\mcC_\psi = \{ \psi(M^r; \Bc): \Bc \in M^q \}$ is a \textit{uniformly definable family} in structure $\mcM$. If $M \subset \mbR$, a structure $\mcM$ is \textit{o-minimal} if for any $n \in \mbN$, every definable subsets of $\mbR^n$ has only finitely many connected components \citep{van2000field}. Theorem 1.2 of \cite{johnson2010compression} established that if $\mcM$ is o-minimal, then $\mcC_\psi$ has an extended $q$-compression. To make use of this theorem, we will further introduce results on sample compression schemes. 

    For any domain $X$, a \textit{concept} $c$ on $X$ is any subset of $X$. Let $C \subset 2^X$ be a concept class. For any $c \in C$, $x \in X$, $c(x) = 1(x \in c)$ gives the classification of $x$ based on concept $c$. Elements of $X \times \{0,1\}$ are called \textit{labeled examples}. For any finite set $B \subset X$, let $B^{\pm,c} \subset B \times \{0,1\}^{\abs{B}}$ denote the set $B$ of examples labeled by the concept $c$. A concept class $C$ on $X$ is said to have an \textit{extended $q$-compression scheme} if there exists a compression function $\kappa: \{S^{\pm,c}: S \subset X, \abs{S} \le \infty, c \in C\} \mapsto \{D^{\pm,c}: D \subset X, \abs{D} \le q, c \in C \}$, and a finite set $\mcR$ of reconstruction functions $\rho: \{D^{\pm,c}:D \subset X, \abs{D} \le q, c \in C \} \mapsto 2^X$, such that any finite $S \subset X$ and concept $c \in C$, $S_\kappa^{\pm,c}=\kappa(S^{\pm,c})$ is a set of at most $q$ labeled examples from $S^{\pm,c}$, and there exists a $\rho \in \mcR$ such that the hypothesis $h=\rho(S_\kappa^{\pm,c}) \subset X$ produces consistent labels as $S^{\pm,c}$, i.e., for all $s \in S, c(s)=h(s)$. See \cite{floyd1995sample} for details and examples. Now we present a lemma that connects extended $q$-compression schemes to VC dimension bounds.    

    \begin{lemma}\label{thm:compression}
        If a concept class $C$ on $X$ admits an extended $q$-compression scheme with $r=\abs{\mcR}$ reconstruction functions, its VC dimension can be upper bounded by a function of $q$ and $r$.        
    \end{lemma}

    \begin{proof}
        By the definition of an extended $q$-compression scheme, for any $S \subset X$ with $m$ elements, the number of unique length-$m$ binary labels created by concepts in $C$ is at most
        \begin{align*}
        \sum_{i=0}^q \binom{m}{i} r 2^i.
        \end{align*}
        This is because, each $S^{\pm,c}$ corresponds to one of the $r$ reconstruction functions, a compressed subset of at most $q$ elements, and their labels. 
        
        For $S$ to be shattered, the number of unique labels has to be greater than $2^m$. Therefore the VC dimension of $C$, denoted as $V$, can be upper bounded as:
        \begin{align*}
        2^V \le \sum_{i=0}^q \binom{V}{i} r 2^i \le r2^q \sum_{i=0}^q \binom{V}{i} \le r 2^q \cbr{\frac{Ve}{q}}^q.
        \end{align*} 
        Taking log on both sides, we have: $V \log2 \le \log r + q \log2 +q \log V + q - q \log q$. So we need to solve for: $V \le a + b \log V$ with $a=q + (q-q\log q + \log r)/\log 2$, and $b = q/\log2$. Manipulating the terms, we have:
        \begin{align*}
        \frac Vb &\le \frac ab + \log \cbr{\frac Vb} + \log b\\
        \frac 12 \frac Vb &\le \frac Vb - \log \cbr{\frac Vb} \le \frac ab + \log b \\
        V &\le 2(a+b \log b),
        \end{align*}
        which proves the claim as both $a$ and $b$ are functions of $q$ and $r$.
    \end{proof}

    Connecting all the dots, consider a first order language $L$ with symbols of the CDF $F$. Let $\mcM$ be an $L$-structure with domain $A$. Then $\mcS_{A'}:=\{ \{(\Bx,y) \in A^p \times A: F(\Bx,y,\bm{\theta}) \le u\}, \bm{\theta} \in A^k, u \in A \}$ can be written as $\mcC_\psi$, with $\psi(\Bz;\Bc)$ defined as $F(\Bx,y,\bm{\theta})\le u$, for $\Bz \in A^{p+1}$ consisting of $\Bx$ and $y$, and $\Bc \in A^{k+1}$ consisting of $\bm{\theta}$ and $u$. If $\mcM$ is a o-minimal structure, $\mcS_{A'}$ would be a uniformly definable family in $\mcM$: It admits an extended $q$-compression scheme, and hence its VC dimension can be upper bounded by a constant independent of $n$. Consequently, $\mcS_A \subset \mcS_{A'}$ is also a VC class and \ref{assmp:entropy} holds.

\end{proof}

\section{Proof of Theorem \ref{thm:CKS}}\label{app:CKS}

\begin{proof}

  Let $T_n$ denote the CKS test statistic. By Lemmas \ref{prop:paper5thm3.2}-\ref{lem:l2l3}, it suffices to show that equation \eqref{eq:thm_iid_c_step1} holds for $T_n$. Observe that $T_n = g \circ Z_n$ is the composition of two functions. The first, $Z_n:\mbR^n \to \ell^{\infty}(\mbR)$ is defined by $Z_n(\BY_n) \in \ell^{\infty}(\mbR)$, $Z_n(\BY_n)(y) = \sqrt{n}(P_n(y)-P_{\hat{\bm{\theta}}}(y))$ for $y \in \mbR$. The second, $g: \ell^{\infty}(\mbR) \to \mbR$, is the supremum norm functional, given by $g(h) =\norm{h}_\infty = \sup_{y \in \mbR} \abs{h(y)}$. Therefore it is sufficient to show $Z_n(\BY_n)$ converges in distribution under $P_{\bm{\theta}_n}$ to some random element $Z \in (\ell^\infty(\mbR), \mcB(\norm{\cdot}_\infty))$, and then apply the continuous mapping theorem. To simplify notation, we denote $Z_n(\BY_n)$ by $Z_n$ from now on. And we proceed in two steps:
  \begin{enumerate}[label=(\roman*)]
    \item First, we show weak convergence of the marginals, i.e., for each $y \in \mbR$, $Z_n(y) \rightsquigarrow Z(y)$ under $P_{\bm{\theta}_n}$;
    \item Second, we show that $Z_n$ is asymptotically tight under $P_{{\bm{\theta}}_n}$.
  \end{enumerate}

  We begin with step 1. By Taylor expansion, for any $y \in \mbR$, 
  \begin{align*}
    Z_n(y) = \sqrt{n}(P_n(y)-P_{\bm{\theta}_0}(y)) - \sqrt{n}\dot{P}_{\tilde{\bm{\theta}}_n}(y)(\hat{\bm{\theta}}_n - \bm{\theta}_0),
  \end{align*}
    for some $\tilde{\bm{\theta}}_n = t\bm{\theta}_0 + (1-t)\hat{\bm{\theta}}_n, \ t \in [0,1]$. By \ref{assmp:estimator}, this is equal to
  \begin{align}
    \underbrace{\sqrt{n}\cbr{\frac 1n \sum_{i=1}^n 1(Y_i \le y) - P_{\bm{\theta}_0}(y)} - \sqrt{n}\frac 1n \dot{P}_{\tilde{\bm{\theta}}_n}(y)I(\bm{\theta}_0)^{-1}\sum_{i=1}^n s_i(\bm{\theta}_0)}_{\tilde{Z}_n(y)} - \dot{P}_{\tilde{\bm{\theta}}_n}(y)o_p(\bm{1}_k)\label{eq:Zn}
  \end{align}
  By the Law of Large Numbers and \ref{assmp:estimator}, $\hat{\bm{\theta}}_n$ converges in probability to $\bm{\theta}_0$. Consequently, the same holds for $\tilde{\bm{\theta}}_n$. By \ref{assmp:diff_A}, $\dot{P}_{\bm{\theta}}$ is continuous in ${\bm{\theta}}$, hence $\dot{P}_{\tilde{\bm{\theta}}_n}(y)$ converges in probability to $\dot{P}_{\bm{\theta}_0}(y)$. Therefore the last term in \eqref{eq:Zn} is $o_p(1_k)$, and we only need to consider $\tilde{Z}_n(y)$ to study the asymptotic distribution of $Z_n(y)$. By the Central Limit Theorem,
  \begin{align*}
    \sqrt{n} \frac 1n \sum_{i=1}^n \begin{pmatrix}
        s_i(\bm{\theta}_0) \\
        1(Y_i \le y) - P_{\bm{\theta}_0}(y)
    \end{pmatrix} \rightsquigarrow 
    N\cbr{0, \Sigma}, \quad \Sigma = \begin{pmatrix}
        I(\bm{\theta}_0) & \bm{\tau} \\
        \bm{\tau}^\top & P_{\bm{\theta}_0}(y)(1-P_{\bm{\theta}_0}(y))
    \end{pmatrix},
  \end{align*}
  where $\bm{\tau} \in \mbR^k$ is the asymptotic covariance. Therefore $\tilde{Z}_n(y)$ is asymptotically normal by Slutsky's theorem.

  By Lemma 7.6 of \cite{van2000asymptotic}, \ref{assmp:diff_A} implies that this family of sampling model is QMD at $\bm{\theta}_0$. Hence for all $\Bh_n \to \Bh \in \mbR^k$, $\bm{\theta}_n = \bm{\theta}_0 + \Bh_n/\sqrt{n}$, the log-likelihood ratio $\Lambda_n$ admits the following expansion:
  \begin{align}\label{eq:llhr}
      \Lambda_n = \log \frac{dP_{{\bm{\theta}}_n}^n}{dP_{\bm{\theta}_0}^n} = \frac 1{\sqrt{n}} \sum_{i=1}^n \Bh^\top s_i(\bm{\theta}_0) - \frac 12 \Bh^\top I({\bm{\theta}_0})\Bh + o_p(1).
  \end{align}
  Therefore $\tilde{Z}_n(y)$ and $\Lambda_n$ are jointly asymptotically normal. By Le Cam's third lemma, if the asymptotic covariance between them is zero, we can conclude that $\tilde{Z}_n(y) \rightsquigarrow Z(y)$ under $P_{{\bm{\theta}}_n}$. By equation \eqref{eq:llhr}, it is sufficient to show that the asymptotic covariance between $\tilde{Z}_n(y)$ and $\BU_n=1/\sqrt{n} \sum_{i=1}^n s_i(\bm{\theta}_0)$ is zero. Let
  \begin{align*}
      V_n = \frac 1{\sqrt{n}} \sum_{i=1}^n (1(Y_i \le y) - P_{\bm{\theta}_0}(y)), \quad \BB_n = \begin{pmatrix}
          -\dot{P}_{\tilde{\bm{\theta}}_n}(y)I(\bm{\theta}_0)^{-1} & 1 \\
          \BI_p & \bm{0}
      \end{pmatrix} \in \mbR^{(p+1) \times (p+1)}.
  \end{align*}
  We have already shown:
  \begin{align*}
      \begin{pmatrix}
          \BU_n \\
          V_n
      \end{pmatrix} \rightsquigarrow N(\bm{0},\Sigma), \quad \BB_n \overset{p}{\to} \BB = \begin{pmatrix}
          -\dot{P}_{\bm{\theta}_0}(y)I(\bm{\theta}_0)^{-1} & 1 \\
          \BI_p & \bm{0}
      \end{pmatrix}.
  \end{align*}
  Therefore
  \begin{align*}
      \begin{pmatrix}
          \tilde{Z}_n(y) \\
          \BU_n
      \end{pmatrix} = \BB_n \begin{pmatrix}
          \BU_n \\
          V_n
      \end{pmatrix} \rightsquigarrow N(\bm{0}, \BB \Sigma \BB^\top).
  \end{align*}
  It can be shown that the covariance term of $\BB \Sigma \BB^\top$ is $\bm{\tau}^\top-\dot{P}_{\bm{\theta}_0}(y)=0$ because by \ref{assmp:domination_A},
  \begin{align*}
    \bm{\tau} = \mathrm{cov}(1(Y_i \le y), s_i(\bm{\theta}_0)) = \frac{\partial}{\partial {\bm{\theta}}} \int p_{\bm{\theta}}(u)1(u \le y) du \Big|_{{\bm{\theta}}=\bm{\theta}_0}= \dot{P}_{\bm{\theta}_0}(y)^\top.
  \end{align*}

  We now prove step 2. Under assumptions \ref{assmp:diff_A} and \ref{assmp:estimator}, $Z_n$ converges weakly to some tight and measurable random element $Z \in \ell^\infty(\mbR)$ under $P_{\bm{\theta}_0}$ (Theorem 19.23, \cite{van2000asymptotic}), therefore $Z_n$ is asymptotically tight under $P_{\bm{\theta}_0}$. The log-likelihood ratio $\Lambda_n$'s are uniformly tight under $P_{\bm{\theta}_0}$ because
  \begin{align*}
      E[\Lambda_n] = E\sbr{\log \frac{dP_{{\bm{\theta}}_n}^n}{dP_{\bm{\theta}_0}^n}} \le \log \cbr{E\sbr{\frac{dP_{{\bm{\theta}}_n}^n}{dP_{\bm{\theta}_0}^n}}} \le \log(1) = 0.
  \end{align*}
  Therefore $Z_n$ and $\Lambda_n$ are jointly asymptotically tight under $P_{\bm{\theta}_0}$, hence
  \begin{align*}
      (Z_n, \Lambda_n) \rightsquigarrow (Z,V) \in (\ell^\infty(\mbR) \times \mbR),
  \end{align*}
  where $V$ is some mean-zero normal random variable. By Le Cam's third lemma, $Z_n$ converges weakly under $P_{{\bm{\theta}}_n}$ to a tight limit process, therefore $Z_n$ is asymptotically tight under $P_{{\bm{\theta}}_n}$. 

  This completes the proof.

\end{proof}

\section{Proof of Theorem \ref{thm:GKS}}\label{app:GKS}

Following the decomposition in equation \eqref{eq:GKS_decompose}, we first show term (c)$=o_p(1)$ uniformly in $u$, which is equivalent to showing $\sup_{u \in [0,1]} \abs{\mbG_n(h_{\hat{\bm{\theta}}_n,u})} \overset{p^*}{\to} 0$. For all $\epsilon>0$, 
\begin{align*}
    P^*\cbr{\sup_{u \in [0,1]}\abs{\mbG_n(h_{\hat{\bm{\theta}}_n,u})} > \epsilon} \le P(\norm{\hat{\bm{\theta}}_n-\bm{\theta}_0}>\delta_n) + P^*(\norm{\mbG_n(h_{\bm{\theta},u})}_{\mcH(\delta_n)}>\epsilon).
\end{align*}
By \ref{assmp:estimator}, $\norm{\hat{\bm{\theta}}_n-\bm{\theta}_0}=O_p(1/\sqrt{n})$, hence $P(\norm{\hat{\bm{\theta}}_n-\bm{\theta}_0}>\delta_n) \to 0$ for $\delta_n=O(\log(n)/\sqrt{n})$. Therefore it suffices to show $E^*[\norm{\mbG_n(h_{\bm{\theta},u})}_{\mcH(\delta_n)}] \to 0$, such that the second term also converges to zero by Markov's inequality. 

Let $e_1,\dots, e_n$ be i.i.d. Rademacher random variables independent of the $Y_i$'s. Define $\mbG_n^{o}(h_{\bm{\theta},u}):= 1/\sqrt{n} \sum_{i=1}^n e_i h_{i,\bm{\theta},u}(Y_i)$. Then by symmetrization, 
\begin{align}\label{eq:symmetrization}
    E^*[\norm{\mbG_n(h_{\bm{\theta},u})}_{\mcH(\delta_n)}] \le 2 \bar{E}_{e,Y}[\norm{\mbG_n^o(h_{\bm{\theta},u})}_{\mcH(\delta_n)}].
\end{align}
We use $\bar{E}$ on the right hand side because by Lemma \ref{lem:ias}, $\norm{\mbG_n^o(h_{\bm{\theta},u})}_{\mcH(\delta_n)}$ is measurable in the completion of $P_{\bm{\theta}_0}$. We highlight that $Y_i$'s are independent but not identically distributed, and we are working with coordinate-wise different vector functions $h_{\bm{\theta},u}$, the symmetrization results we are using here are slightly different to the classical results for i.i.d. data (see Lemma 2.3.1 in \cite{van2023weak}). The proof is very similar though, hence omitted.

Now $\{\mbG_n^o(h_{\bm{\theta},u}): h_{\bm{\theta},u} \in \mcH(\delta_n) \}$ is a separable sub-Gaussian process, hence we can apply Dudley's entropy integral bound (see Corollary 2.2.9 in \cite{van2023weak}), i.e., there exists $K_3>0$ such that
\begin{align}\label{eq:dudley}
    \bar{E}_{e,Y}[\norm{\mbG_n^o(h_{\bm{\theta},u})}_{\mcH(\delta_n)}] &\le \bar{E}_{e,Y}\abs{\mbG_n(h_{\bm{\theta}_0,u_0})} + K_3 \bar{E}_Y \sbr{\int_0^{\hat{\sigma}_n(\delta_n)} \sqrt{\log N(\epsilon,\mcH(\delta_n), \hat{d}_n)} d\epsilon} \nonumber \\
    &\le K_3 \sqrt{\log(K_1) + K_2 \log(n)} \bar{E}[\hat{\sigma}_n(\delta_n)],
\end{align}
where $\hat{\sigma}_n(\delta_n)$ is the diameter of $\mcH(\delta_n)$ under $\hat{d}_n$. For notational simplicity, we will write $\hat{\sigma}(\delta_n)$ henceforth. The second inequality holds by \ref{assmp:entropy}, and that $\mbG_n(h_{\bm{\theta}_0,u_0})\equiv 0$ for any $u_0$, by the definition of $h_{\bm{\theta},u}$. Let $\norm{\cdot}_{L_2(\hat{P}_n)}$ denote the empirical $L_2$ norm, $\norm{\cdot}_{L_2(P_n)}$ denote the population $L_2$ norm. Observe 
\begin{align}\label{eq:diam}
    &\bar{E}[\hat{\sigma}(\delta_n)] \le \sqrt{\bar{E}[\hat{\sigma}(\delta_n)^2]} \le 2\sqrt{\bar{E}\sbr{\sup_{h_{\bm{\theta},u} \in \mcH(\delta_n)} \norm{h_{\bm{\theta},u}}_{L_2(\hat{P}_n)}^2}} \nonumber \\
    \le& 2\sqrt{\bar{E}\sbr{\abs*{\sup_{h_{\bm{\theta},u} \in \mcH(\delta_n)} \norm{h_{\bm{\theta},u}}_{L_2(\hat{P}_n)}^2 - \sup_{h_{\bm{\theta},u} \in \mcH(\delta_n)} \norm{h_{\bm{\theta},u}}_{L_2(P_n)}^2} + \sup_{h_{\bm{\theta},u} \in \mcH(\delta_n)} \norm{h_{\bm{\theta},u}}_{L_2(P_n)}^2}} \nonumber \\
    \le& 2\sqrt{\bar{E}\sbr{\frac 1{\sqrt{n}} \norm{\mbG_n(h_{\bm{\theta},u}^2)}_{\mcH^2(\delta_n)}} + \sup_{h_{\bm{\theta},u} \in \mcH(\delta_n)}\norm{h_{\bm{\theta},u}}_{L_2(P_n)}^2},
\end{align}
with $\mcH^2(\delta_n)$ defined in equation \eqref{eq:Hn2}. We will show both terms vanish asymptotically.

By \ref{assmp:diff_B} and the proof of Proposition \ref{prop:frechet}, there exists $M_2>0$ such that 
\begin{align*}
    \sup_{i \in \mbN} \sup_{\bm{\theta} \in B(\delta_0,\bm{\theta}_0)} \sup_{y \in \mbR} \norm{\dot{F}(y;\Bx_i,\bm{\theta})} \le M_2.
\end{align*}
Then notice that each $h_{i,\bm{\theta},u}(Y_i)^2$ is non-zero if and only if $u$ is in between $F(Y_i;\Bx_i,\bm{\theta})$ and $F(Y_i;\Bx_i,\bm{\theta}_0)$. Therefore 
\begin{align*}
    E[h_{i,\bm{\theta},u}(Y_i)^2] &\le P[\abs{F(Y_i;\Bx_i,\bm{\theta}_0)- u} \le \abs{F(Y_i;\Bx_i,\bm{\theta}_0)- F(Y_i;\Bx_i,\bm{\theta})}] \\
    &\le P[\abs{F(Y_i;\Bx_i,\bm{\theta}_0)- u} \le \norm{\dot{F}(Y_i;\Bx_i,\tilde{\bm{\theta}})} \norm{\bm{\theta}-\bm{\theta}_0}] \\
    &\le P[\abs{F(Y_i;\Bx_i,\bm{\theta}_0)- u} \le M_2\delta_n] \le 2M_2 \delta_n,
\end{align*}
where $\tilde{\bm{\theta}}$ is some convex combination of $\bm{\theta}$ and $\bm{\theta}_0$, and the last inequality holds because $F(Y_i;\Bx_i,\bm{\theta}_0)$ is uniformly distributed under $P_{\bm{\theta}_0}$. Therefore $\sup \norm{h_{\bm{\theta},u}}_{L_2(P_n)}^2 \le 2M_2\delta_n = O(\log(n)/\sqrt{n})$.

Next, we bound $\bar{E}[\norm{\mbG_n(h_{\bm{\theta},u}^2)}_{\mcH^2(\delta_n)}]$. By symmetrization and Dudley's entropy integral bound again, there exists $K_4>0$,
\begin{align}\label{eq:ULLN_bound}
    \bar{E}[\norm{\mbG_n(h_{\bm{\theta},u}^2)}_{\mcH^2(\delta_n)}] &\le 2 \bar{E}_{e,Y}[\norm{\mbG_n^o(h_{\bm{\theta},u}^2)}_{\mcH^2(\delta_n)}] \nonumber \\
    &\le K_4 \bar{E} \sbr{\int_0^\infty \sqrt{\log N(\epsilon,\mcH^2(\delta_n),\hat{d}_n})d\epsilon}.
\end{align}
Notice $\hat{d}_n(h_{\bm{\theta},u}^2, h_{\bm{\theta}',u'}^2) \le 2 \hat{d}_n(h_{\bm{\theta},u}, h_{\bm{\theta}',u'})$ because 
\begin{align*}
   \hat{d}_n(h_{\bm{\theta},u}^2, h_{\bm{\theta}',u'}^2)^2 &= \frac 1n \sum_{i=1}^n (h_{i,\bm{\theta},u}(Y_i)-h_{i,\bm{\theta}',u'}(Y_i))^2(h_{i,\bm{\theta},u}(Y_i)+h_{i,\bm{\theta}',u'}(Y_i))^2 \\
   &\le 4 \frac 1n \sum_{i=1}^n (h_{i,\bm{\theta},u}(Y_i)-h_{i,\bm{\theta}',u'}(Y_i))^2.
\end{align*}
Therefore any $(\epsilon/2)$-net of $\mcH(\delta_n)$ is an $\epsilon$-net of $\mcH^2(\delta_n)$ under $\hat{d}_n$, and equation \eqref{eq:ULLN_bound} can be upper bounded by 
\begin{align*}
    K_4 \bar{E} \sbr{\int_0^\infty \sqrt{\log N(\epsilon/2,\mcH(\delta_n),\hat{d}_n})d\epsilon} \le K_4 \sqrt{\log(K_1) + K_2 \log(n)} = O(\sqrt{\log(n)}).
\end{align*}
Putting these results together with equations \eqref{eq:symmetrization},\eqref{eq:dudley} and \eqref{eq:diam},
\begin{align*}
    E^*[\norm{\mbG_n(h_{\bm{\theta},u})}_{\mcH(\delta_n)}] &\le O(\sqrt{\log(n)})\sqrt{O(\log(n)/\sqrt{n})+O(\sqrt{\log(n)}/\sqrt{n})} \\
    &= O(\log(n)/n^{1/4}) \to 0,
\end{align*}
which completes the proof for step 1, and shows that the asymptotic distribution of $\tilde{Z}_n$ is only dependent on term (a)+(b).

Next, we use Le Cam's third lemma to show that for each $u$, (a)+(b) satisfies equation \eqref{eq:thm_iid_c_step1}, and it is asymptotically tight. We start with term (b), observe:
\begin{align*}
    m_i(\bm{\theta}_0,u)-m_i(\hat{\bm{\theta}}_n,u)&=F(Q(u;\Bx_i,\bm{\theta}_0);\Bx_i,\bm{\theta}_0)-F(Q(u;\Bx_i,\hat{\bm{\theta}}_n);\Bx_i,\bm{\theta}_0) \\
    &=F(Q(u;\Bx_i,\hat{\bm{\theta}}_n);\Bx_i,\hat{\bm{\theta}}_n)-F(Q(u;\Bx_i,\hat{\bm{\theta}}_n);\Bx_i,\bm{\theta}_0) \\
    &= \dot{F}(Q(u;\Bx_i,\hat{\bm{\theta}}_n);\Bx_i,\tilde{\bm{\theta}}_{ni})^\top (\hat{\bm{\theta}}_n-\bm{\theta}_0) \\
    &= \dot{F}(Q(\tilde{u}_{ni};\Bx_i,\tilde{\bm{\theta}}_{ni});\Bx_i,\tilde{\bm{\theta}}_{ni})^\top (\hat{\bm{\theta}}_n-\bm{\theta}_0) \\
    &= g_i(\tilde{u}_{ni},\tilde{\bm{\theta}}_{ni})^\top (\hat{\bm{\theta}}_n-\bm{\theta}_0),
\end{align*}
where $\tilde{\bm{\theta}}_{ni}=t\bm{\theta}_0+(1-t)\hat{\bm{\theta}}_n$ for some $t \in [0,1]$, and $\tilde{u}_{ni}=F(Q(u;\Bx_i,\hat{\bm{\theta}}_n);\Bx_i,\tilde{\bm{\theta}}_{ni})$. By \ref{assmp:diff_B} and the proof of Proposition \ref{prop:frechet}, $\tilde{u}_{ni} \overset{p}{\to} u$ uniformly in $u$ and $i$ as $\hat{\bm{\theta}}_n \overset{p}{\to} \bm{\theta}_0$. And because $g_i$ is uniformly continuous in $u$ and $\bm{\theta}$, $g_i(\tilde{u}_{ni},\tilde{\bm{\theta}}_{ni}) \overset{p}{\to} g_i(u,\bm{\theta}_0)$ uniformly in $u$ and $i$. Let $\BU_n:=1/\sqrt{n}\sum_{i=1}^n s_i(\bm{\theta}_0)$, by \ref{assmp:estimator}, term (b) can be decomposed as
\begin{align*}
    &\cbr{\frac 1n \sum_{i=1}^n [g_i(\tilde{u}_{ni},\tilde{\bm{\theta}}_{ni})+r_{ni}(u)]}^\top (\sqrt{n}(\hat{\bm{\theta}}_n-\bm{\theta}_0)) \\
    =&\cbr{\frac 1n \sum_{i=1}^n g_i(u,\bm{\theta}_0)}^\top \BI(\bm{\theta}_0)^{-1} \BU_n + \cbr{\frac 1n \sum_{i=1}^n g_i(u,\bm{\theta}_0)}^\top o_p(\bm{1}_k) + \\
    &\cbr{\frac 1n \sum_{i=1}^n r_{ni}(u)}^\top(\sqrt{n}(\hat{\bm{\theta}}_n-\bm{\theta}_0)).
\end{align*}
Because $r_{ni}(u)=o_p(1)$ uniformly in $u$ and $i$, the last term is $o_p(1)$ uniformly in $u$. By \ref{assmp:diff_B}, $1/n\sum_{i=1}^n g_i(u,\bm{\theta}_0) \to g(u)$ uniformly in $u$, therefore the second term is $o_p(1)$ uniformly in $u$, and the first term is asymptotically tight. Consequently, the asymptotic distribution of $\tilde{Z}_n(u)$ depends on
\begin{align*}
    \tilde{Z}_n^*(u)=\frac 1{\sqrt{n}}\sum_{i=1}^n \cbr{u - 1(F(Y_i;\Bx_i,\bm{\theta}_0) \le u)} + \cbr{\frac 1n \sum_{i=1}^n g_i(u,\bm{\theta}_0)}^\top \BI(\bm{\theta}_0)^{-1} \BU_n.
\end{align*}
The first term converges in distribution to a Brownian bridge, hence $\tilde{Z}_n^*$ is asymptotically tight. Now to use Le Cam's third lemma, it suffices to show that for each $u$, $\tilde{Z}_n^*(u)$ and the log-likelihood ratio $\Lambda_n$ are jointly asymptotically normal with zero asymptotic covariance.

\ref{assmp:supp_B}, \ref{assmp:diff_B}, \ref{assmp:Fisher_B} and \ref{assmp:ui} are sufficient to invoke Theorem 3.1 in \cite{philippou1973asymptotic} to show that the sequence of models $\{P_{\bm{\theta}}^n: \bm{\theta} \in \Theta\}$ is locally asymptotically normal: for all $\Bh_n \to \Bh \in \mbR^k$, $\bm{\theta}_n=\bm{\theta}_0+\Bh_n/\sqrt{n}$, the log-likelihood ratio $\Lambda_n$ admits the same asymptotic expansion as in equation \eqref{eq:llhr}. From here, it is obvious that for each $u$, $\Lambda_n$ and $\tilde{Z}_n^*(u)$ are jointly asymptotically normal. And to study their asymptotic covariance, it suffices to look at the asymptotic covariance between $\BU_n$ and each term of $\tilde{Z}_n^*(u)$. For the first term:
\begin{align*}
  &\lim_{n \to \infty} \frac 1n \sum_{i=1}^n Cov(u-1(F(Y_i;\Bx_i,\bm{\theta}_0) \le u), s_i(\bm{\theta}_0)) \\
  =&\lim_{n \to \infty} \frac 1n \sum_{i=1}^n -E\sbr{1(F(Y_i;\Bx_i,\bm{\theta}_0) \le u) s_i(\bm{\theta}_0)} \\
  =& \lim_{n \to \infty} \frac 1n \sum_{i=1}^n - \frac{\partial}{\partial \bm{\theta}} \int 1(y \le Q(u;\Bx_i,\bm{\theta}))dy \Bigg|_{\bm{\theta}=\bm{\theta}_0}\\
  =& \lim_{n \to \infty} \frac 1n \sum_{i=1}^n -g_i(u,\bm{\theta}_0) = -g(u),
\end{align*}
where the second to last line holds by \ref{assmp:domination_B}, and the last inequality holds by \ref{assmp:diff_B}. For the second term:
\begin{align*}
  &\lim_{n \to \infty} \frac 1n Cov \cbr{\sum_{i=1}^n g_i(u,\bm{\theta}_0)^\top \BI(\bm{\theta}_0)^{-1} \BU_n, \BU_n} \\
  =&\lim_{n \to \infty} \cbr{\frac 1n \sum_{i=1}^n g_i(u,\bm{\theta}_0)} \BI(\bm{\theta}_0)^{-1} \cbr{\frac 1n \sum_{i=1}^n E[s_i(\bm{\theta}_0)s_i(\bm{\theta}_0)^\top]}=g(u).
\end{align*}
Therefore the overall asymptotic covariance is zero. This completes the proof.

\section{Proof of Corollary \ref{coro:regression}}\label{app:coro}

It is obvious that Proposition \ref{prop:location_scale} applies to location-scale family models such as the normal linear regression, Student-t regression, Gamma GLM (for both log and inverse link functions), and Weibull GLM. Lognormal regression model is a location-scale family on the log scale, hence it is also covered by Proposition \ref{prop:location_scale}. We will show Proposition \ref{prop:o-minimal} is more general. It applies not only to the models listed above, but also to other regression models such as the Beta GLM and inverse Gaussian GLM.

Following the proof for Proposition \ref{prop:o-minimal}, it suffices to show that the CDF $F$ consists of functions that are definable in an o-minimal structure under any compact domain $A \subset \mbR$. If this holds, $\mcS_A$ defined in Proposition \ref{prop:o-minimal} would be a subset of a uniformly definable family in an o-minimal structure. 

\cite{speissegger1997pfaffian} presented one of the largest o-minimal structures, denoted as $Pf(\mbR_{\mcG})$, where the following functions are definable: polynomials, restricted analytical functions, exponential function, Gamma function and erf function. For all the regression models listed in Corollary \ref{coro:regression}, the CDF $F$ possibly consists of polynomials, exponential functions, Gamma function, erf function, lower incomplete Gamma function, incomplete Beta function, and hypergeometric function. All of these functions are definable in $Pf(\mbR_{\mcG})$ on a compact domain $A \subset \mbR$. Therefore Proposition \ref{prop:location_scale} holds for all of these regression models.

\section{Proof for numerical experiments setup}\label{app:sim}

\subsection{Gamma model}\label{app:sim_gamma}

We will show that the setup for the Gamma model numerical experiment in Section \ref{sec:sim_gamma} satisfy \ref{assmp:supp_A}-\ref{assmp:estimator}. 
 
\begin{proof}
Let $P_{\alpha,\beta}(y)$ denote the CDF of $\mathrm{Gamma}(\alpha,\beta)$.

\begin{enumerate}[label=A\arabic{*}.]
  \item Holds obviously. 
  \item It is obvious that $P_{\alpha,\beta}(y)$ is continuously differentiable in both $(\alpha,\beta)$ and $y$. We will show $P_{\alpha,\beta}(y)$ satisfies conditions in Proposition \ref{prop:frechet}. Observe $P_{\alpha,\beta}(y) = \gamma(\alpha,\beta y)/\Gamma(\alpha)$, where $\gamma(\alpha,\beta y)= \int_0^{\beta y} u^{\alpha-1} e^{-u}du$ is the lower incomplete gamma function. We compute $\dot{P}_{\alpha,\beta}(y)$, the gradient of $P_{\alpha,\beta}(y)$ with respect to $(\alpha,\beta)$.
  \begin{align*}
      \dot{P}_{\alpha,\beta}(y) = \begin{pmatrix}
          \frac{\partial P_{\alpha,\beta}(y)}{\partial \alpha} \\
          \frac{\partial P_{\alpha,\beta}(y)}{\partial \beta}
      \end{pmatrix} = \begin{pmatrix}
          -\frac{\psi(\alpha)}{\Gamma(\alpha)} \gamma(\alpha, \beta y) + \frac 1{\Gamma(\alpha)}\frac{\partial \gamma}{\partial \alpha} \\
          \frac 1{\Gamma(\alpha)}\frac{\partial \gamma}{\partial \beta}
      \end{pmatrix}
  \end{align*}
  where
  \begin{align*}
    \frac{\partial \gamma}{\partial \alpha} = \int_{0}^{\beta y} u^{\alpha-1}\log u e^{-u}du, \quad \frac{\partial \gamma}{\partial \beta} = \beta^{\alpha-1}y^\alpha e^{-\beta y}.
  \end{align*}
  Because $P_{\alpha,\beta}(y)$ is jointly continuous in $(\alpha,\beta,y)$ and it's strictly increasing in $y$, its quantile function $Q(u,\alpha,\beta)$ is continuous in $(\alpha,\beta,u)$. $\dot{P}_{\alpha,\beta}(y)$ is obviously continuous in $(\alpha,\beta,y)$, hence the function $g(u,\alpha,\beta)=\dot{P}_{\alpha,\beta}(Q(u,\alpha,\beta))$ defined in Proposition \ref{prop:frechet} is continuous in $(u,\alpha,\beta)$ on $(0,1) \times \bar{B}(\delta_0,(\alpha_0,\beta_0))$. It can be easily shown that $\lim_{u \to 0^+}g(u,\alpha,\beta) = \bm{0}$, $\lim_{u \to 1^-}g(u,\alpha,\beta) = \bm{0}$. This is sufficient for the condition of Proposition \ref{prop:frechet}, hence $P_{\alpha,\beta}$ is Fréchet differentiable.
  \item To show \ref{assmp:domination_A} holds, we will show that the sufficient condition discussed in Remark \ref{rmk:domination} is satisfied. The score function can be bounded element wise:
  \begin{align*}
    \abs{s(\alpha,\beta)} = \begin{pmatrix}
      \abs{\log\beta - \psi(\alpha) + \log Y} \\
      \abs{\alpha/\beta - Y}
    \end{pmatrix} \le \begin{pmatrix}
      \log\beta + \psi(\alpha) + \log Y \\
      \alpha/\beta + Y
    \end{pmatrix},
  \end{align*}
  and both $\log Y$ and $Y$ are integrable. Therefore \ref{assmp:domination_A} holds.
  \item Conditions on the Fisher information matrix clearly hold for the Gamma distribution. 
  \item Under the Gamma distribution, MLEs for $\alpha$ and $\beta$ are CAN estimators. The priors we considered are all continuous with positive density on the entire parameter space. Also the parameters of a Gamma distribution are identifiable such that for all $\epsilon>0$, 
  \begin{align*}
    \inf_{\norm{(\alpha,\beta)-(\alpha',\beta')} > \epsilon} \sup_{y \in \mbR^+}\abs{P_{\alpha,\beta}(y)-P_{\alpha',\beta'}(y)}>0.
  \end{align*}
  Therefore Lemma 10.4, and hence Theorem 10.1 the BvM theorem in \cite{van2000asymptotic} holds. 
  \item Both $\log Y$ and $Y$ are square integrable. Therefore as discussed in Remark \ref{rmk:estimator}, this is sufficient to invoke Theorem 5.39 in \cite{van2000asymptotic} to conclude that the MLEs admit the expansion stated in \ref{assmp:estimator}. By \ref{assmp:BvM}, the same expansion applies to the posterior means.
\end{enumerate}
\end{proof}

\subsection{Gamma GLM}\label{app:sim_gglm}

We assume $\Bx_i \in \mcX$ for some compact $\mcX \subset \mbR^p$, and $\sum_{i=1}^n \Bx_i \Bx_i^\top/n \to \BQ$ for some positive define matrix $\BQ \in \mbR^{p \times p}$. Under these conditions, we will show that the setup for the Gamma GLM numerical experiment in Section \ref{sec:sim_gglm} satisfy \ref{assmp:BvM}-\ref{assmp:entropy}.

\begin{proof}

\ref{assmp:supp_B} holds trivially. \ref{assmp:entropy} holds by Corollary \ref{coro:regression}. Under the assumed regularity conditions on the covariates, the proof strategy for the Gamma model example also applies to \ref{assmp:BvM}, \ref{assmp:diff_B}-\ref{assmp:Fisher_B}. 

The assumed regularity condition on the covariates satisfy assumptions for Theorem 5 of \cite{fahrmeir1985consistency}, which ensures that there exists a sequence of MLE $\hat{\bm{\theta}}_n \to \bm{\theta}_0$ a.s.. Therefore Theorem 2.2 in \cite{he1996general} holds, and the MLE admits the expansion in \ref{assmp:estimator}. The same expansion also applies to the posterior mean by \ref{assmp:BvM}.

Finally, we show \ref{assmp:ui} holds for $\gamma=1$. The score function at the true parameter values can be derived as:
\begin{align*}
    s_i(\alpha,\bm{\beta}) = \begin{pmatrix}
        \log(\alpha_0)+1-\Bx_i^\top \bm{\beta}_0-\psi(\alpha_0) + \log(Y_i)-Y_i \exp(-\Bx_i^\top \bm{\beta}_0) \\
        \alpha_0[Y_i\exp(-\Bx_i^\top \bm{\beta}_0)-1]\Bx_i
    \end{pmatrix}.
\end{align*}
$\sup_{i \in \mbN} E\sbr{\norm{s_i(\alpha,\bm{\beta})}^3} < \infty$ because $\Bx_i$ lives in a compact space $\mcX$, and all the moments of $Y_i$ exists under the Gamma GLM.

\end{proof}

\end{appendix}

\end{document}